\newtheorem*{rep@theorem}{\rep@title}
\newcommand{\newreptheorem}[2]{%
\newenvironment{rep#1}[1]{%
 \def\rep@title{#2 \ref{##1}}%
 \begin{rep@theorem}}%
 {\end{rep@theorem}}}
\theoremstyle{thmstyleone}%
\newtheorem{theorem}{Theorem}[section]
\newtheorem{proposition}[theorem]{Proposition}%
\newtheorem{lemma}[theorem]{Lemma}%
\newtheorem{corollary}[theorem]{Corollary}%
\newtheorem{conjecture}[theorem]{Conjecture}%
\newtheorem{remark}[theorem]{Remark}%
\newcommand{\CR}{\mathcal{C}(\mathbb{R}^2)}
\theoremstyle{thmstyletwo}%
\newtheorem{example}{Example}%
\newtheorem{assumption}{Assumption}%
\theoremstyle{thmstylethree}%
\newtheorem{definition}{Definition}%
\begin{document}

\title[Article Title]{Quantitative Transversal Theorems in the Plane}


\author[1]{\fnm{Ilani} \sur{Axelrod-Freed}}\email{ilani\_af@mit.edu}

\author[2]{\fnm{Jo\~ao} \sur{Pedro Carvalho}}\email{jpcarv@umich.edu}

\author*[3]{\fnm{Yuki} \sur{Takahashi}}\email{yukit@berkeley.edu}

\affil[1]{\orgdiv{Mathematics Department}, \orgname{Massachusetts Institute of Technology}, \orgaddress{\city{Cambridge}, \postcode{02461}, \state{MA}, \country{USA}}}

\affil[2]{\orgdiv{Mathematics Department}, \orgname{University of Michigan}, \orgaddress{\city{Ann Arbor}, \postcode{48104}, \state{MI}, \country{USA}}}

\affil*[3]{\orgdiv{Mathematics Department}, \orgname{University of California, Berkeley}, \orgaddress{\city{Berkeley}, \postcode{94720}, \state{CA}, \country{USA}}}


\abstract{Hadwiger's theorem is a Helly-type theorem involving common transversals to families of convex sets instead of common intersections. Subsequently, Pollack and Wenger identified a necessary and sufficient condition, called a consistent $k$-ordering, for the existence of a hyperplane transversal for sets in $\mathbb{R}^d$.
We obtain a quantitative generalization of Hadwiger's theorem in $\mathbb{R}^2$, showing that compact convex sets in $\mathbb{R}^2$ with a quantitative version of consistent ordering have a transversal satisfying quantitative requirements. Our proof generalizes the methods in Wenger's proof of Hadwiger's theorem in $\mathbb{R}^2$. We also prove colorful versions of our results.}

\keywords{Hadwiger's Theorem, Helly-type Theorem, convex sets, transversal, colorful}


\pacs[MSC Classification]{52C30, 52C35}

\maketitle

\section{Introduction}\label{sec1}

Helly's theorem \cite{Helly:1923wr} is one of the most fundamental results in discrete geometry. It states that \textit{given a finite family of convex sets in $\mathbb{R}^d$, if every $d+1$ sets have a point in common, then the whole family has a point in common.} 
In this paper, we investigate a transversal variant of Helly's theorem, called Hadwiger's theorem.
Hadwiger's theorem \cite{Hadwiger:1957tx} states that \textit{given an ordered family of pairwise disjoint compact convex sets in the plane, if every three sets can be intersected by some directed line (a transversal) consistent with the ordering, then there exists a common transversal of the family.} In particular, we prove quantitative and colorful versions of Hadwiger's theorem, motivated by quantitative and colorful generalizations of Helly's theorem.

Building on the results of Katchalski \cite{Katchalski1980} and 
Goodman and Pollack \cite{Goodman1988}, 
Wenger \cite{Wenger.1990} provided a proof of Hadwiger's theorem which holds for non-disjoint compact convex sets. Since then Pollack and Wenger \cite{Pollack:1990cna} were able to prove similar results for (non-disjoint) compact connected sets in $\mathbb{R}^d$ by abstracting the requirement of any $d+1$ sets having certain transversals to the notion of having a consistent ordering (also called ``separating consistently" in \cite{McGinnis2024}).
\begin{definition}[\cite{Amenta:2017ed}]\label{def:cons-order}
    Let $\mathcal{F}$ be a finite family of connected sets in $\mathbb{R}^d$. We say that there is a \textcolor{blue}{consistent $k$-ordering of $\mathcal{F}$} 
    if there exists a map $\phi:\mathcal{F}\to \mathbb{R}^k$ such that for any two subfamilies $\mathcal{F}_1, \mathcal{F}_2$ with $|\mathcal{F}_1|+|\mathcal{F}_2|\leq k+2$,
    \[\text{conv}(\mathcal{F}_1)\cap \text{conv}(\mathcal{F}_2)=\emptyset \implies \text{conv}(\phi(\mathcal{F}_1))\cap \text{conv}(\phi(\mathcal{F}_2))=\emptyset.\]
\end{definition}
\noindent For a family $\mathcal{F}$ of (non-disjoint) compact convex sets in $\mathbb{R}^2$, the existence of a consistent 1-ordering of $\mathcal{F}$ is equivalent to having an order on $\mathcal{F}$ such that any three sets have a transversal that stabs them in order (\cite{Wenger.1990}, \cite{Pollack:1990cna}, see \cref{def:stabinorder} in \cref{sec:StabbingOrder}).

Another generalization of Helly-type results is to the quantitative case, which imposes requirements on how ``large" the intersection of sets must be.
Previous quantitative Helly-type results characterized this notion of largeness by the width (\cite{Buchman1982}), diameter (\cite{Barany:1984ed}, \cite{Silouanos2017}, \cite{Dillon2021}, \cite{Hernandez2022}), and volume (\cite{Naszodi:2016he}, \cite{Sarkar:2019tp}, \cite{Jung2022}) of convex sets. 
For a survey of this type of results, see \cite{Holmsen2017}.
As a way to combine various notions of largeness, Amenta, De Loera, and Sober\'{o}n \cite{Amenta:2017ed} suggest measuring the largeness of an intersection using non-negative monotone functions on the space of all convex sets in $\mathbb{R}^d.$
We expand upon this viewpoint by providing a quantitative notion for transversals. That is, given a monotone function $f$ and a constant $\alpha$,
we consider a transversal $\ell$ to cut a convex set $C$ in ``large enough" pieces if $f(C\cap\ell^+), f(C\cap\ell^-) \geq \alpha$ where $\ell^+$ and $\ell^-$ are the left and right half-planes of $\ell$. 
We refer to a transversal with this property as an $(f,\alpha)$-transversal (see \cref{def:f-alpha-transversal}). 

In this paper, we provide a quantitative version of Hadwiger's theorem guaranteeing the existence of an $(f, \alpha)$-transversal. We adapt proof strategies from Wenger (\cite{Wenger.1990}), but with a new framing in terms of the existence of a consistent 1-ordering. To do this, we define a quantitative version of the consistent 1-ordering, which we call an $(f,\alpha)$-consistent ordering. However, unlike the non-quantitative case, there can now exist three sets with an $(f,\alpha)$-transversal but no  $(f,\alpha)$-consistent ordering.

Another direction for the generalizations of Helly-type theorems is colorful variations.
For instance, both Helly's theorem and Carath\'eodory's theorem have colorful generalizations \cite{BARANY1982141}.
As for Hadwiger's theorem, Arocha, Bracho, and Montejano \cite{Arocha:2008ti} proved a colorful version of Hadwiger's theorem in $\mathbb{R}^2$. 
More recently, Holmsen \cite{Holmsen2022}, based on \cite{Holmsen2016}, showed a colorful version of Hadwiger's theorem in $\mathbb{R}^d$. 
In this paper, we follow Arocha, Bracho, and Montejano \cite{Arocha:2008ti}'s approach to prove colorful and quantitative Hadwiger in $\mathbb{R}^2$. 

We now introduce the structure of this paper.
In \cref{stab separate direction}, we define $(f,\alpha)$-stabbing and -separation directions and establish properties of $(f, \alpha)$-transversals. We also provide the definition of an $(f,\alpha)$-consistent ordering.

In \cref{Wenger thrms}, we prove the following quantitative version of Hadwiger's theorem.
\begin{theorem}
\label{thrm:quantitativehadwiger}
Let $\mathcal{F}$ be a finite family of compact, convex sets in the plane with an $(f,\alpha)$-consistent ordering. Then, there exists an $(f,\alpha)$-transversal for all sets in $\mathcal{F}$. 
\end{theorem}

In \cref{sec:StabbingOrder}, we provide an example of three sets with an $(f,\alpha)$-transversal that cannot have an $(f,\alpha)$-consistent ordering. We show that if we impose certain additional conditions on the sets, then we can define an $(f,\alpha)$-stabbing order for each transversal. 
In such a setting, the following are equivalent:
\begin{enumerate}
    \item existence of an $(f,\alpha)$-consistent ordering of $\mathcal{F}$,
    \item existence of some ordering of $\mathcal{F}$ so that every three sets in $\mathcal{F}$ has an $(f,\alpha)$-transversal consistent with the ordering.
\end{enumerate}
In this setting, we prove quantitative versions of two additional theorems of Wenger (\cite[Theorem 2, Theorem 3]{Wenger.1990}) guaranteeing when an ordered family of sets has a transversal that stabs them all \emph{in order}.

Finally, in \cref{colorful} we prove \cref{thm:colorfulquanthadwiger}, providing a colorful version of \cref{thrm:quantitativehadwiger} based on the work of Arocha, Bracho, and Montejano \cite{Arocha:2008ti}. For this, we define a colorful $(f,\alpha)$-consistent ordering (see \cref{defn:colorful_consistent_ordering}). Our definition generalizes the notion of ``rainbow consistent $k$-ordering" in \cite{Holmsen2016}.

\begin{theorem}\label{thm:colorfulquanthadwiger}
Let $\mathcal{F}$ be a finite family of compact, convex sets in the plane, where each set is colored red, green, or blue, with a colorful $(f,\alpha)$-consistent ordering.
Then, there exists a color with an $(f,\alpha)$-transversal to all the sets of that color.
\end{theorem}

\section{$(f, \alpha)$-stabbing and -separation directions}
\label{stab separate direction}
In this section, we begin by defining the monotone functions we will be working with, from which we define $(f,\alpha)$-traversals, stabbing and separation directions. Using these, we define the notion of an $(f,\alpha)$-consistent ordering, and establish some foundational lemmas.

To make things precise, we first define the quantitative metric on sets which we will be working with. 
Let \textcolor{blue}{$\CR$} be the space of compact, convex subsets of $\mathbb{R}^2$.

\begin{definition}
A \textcolor{blue}{monotone function} on convex sets is a function $f: \mathcal{C}(\mathbb{R}^2) \rightarrow \mathbb{R}$, such that for any convex sets $C_1 \subseteq C_2 \in \mathcal{C}(\mathbb{R}^2)$, we have $f(C_1) \leq f(C_2)$.
\end{definition}

\begin{assumption}\label{remark:hasudorffmetric}
For the rest of this paper, we will also assume that all our monotone functions are continuous with respect to the Hausdorff metric on the space of compact sets and that $f(\emptyset)=0$.
\end{assumption}

Classic examples of monotone functions on convex sets include area and perimeter functions. Given a family of sets $C_1, \dots, C_n$ in $\CR$, we can pick a monotone function $f_{i}$ and a value $\alpha_{i}$ for each $C_i$. These functions and values may be the same for all $C_i$, or they may differ. For example, if we want to measure the percent area of each set, then we let $f_{i}$, the function measuring the percent area $C_i$, be defined on any $C\in \mathcal{C}(\mathbb{R}^2)$ by 
\[f_i(C_i) = 1, f_i(\emptyset)=0, \text{ and } f_i(C) = \frac{\text{area of }(C \cap C_i)}{\text{area of }C_i}.\]
\begin{remark}\label{remark:convex-connected}
    While we work in the space of compact convex sets of $\mathbb{R}^2$, all of the results in this paper, except for special cases mentioned in \cref{rmk:not convex weird}, still hold for compact (not necessarily convex or even connected) sets in $\mathbb{R}^2$. In that case, we require that each $f_i$ is monotone and continuous (with respect to the Hausdorff metric) on the relevant subsets of $C_i$, which are exactly all intersections of $C_i$ with closed half-planes. 
\end{remark} 

Now, instead of asking whether a line $\ell$ intersects a set, we will be concerned with the value that our monotone function takes on the subsets on either side of $\ell$. We will say that $\ell$ ``stabs" a set $C$ if the value of our corresponding monotone function $f_{i}$ on the subsets on both sides of $\ell$ is sufficiently large. We define this formally as follows:

Given a directed line $\ell \in \mathbb{R}^2$ and a set $C \in \CR$, let $\ell^+(C)$ be the intersection of $C$ and the closed half space bounded by $\ell$ containing all points to the left of $\ell$'s pointing direction, and $\ell^-(C)$ be the intersection of $C$ and the opposite closed half space bounded by $\ell$. 

\begin{definition}\label{def:f-alpha-transversal}
Let $C_1, \dots, C_n$ be sets in $\mathcal{C}(\mathbb{R}^2)$. Let $f = (f_{1}, \dots , f_{n})$ be a tuple of monotone functions $f_{i}: \mathcal{C}(\mathbb{R}^2) \rightarrow \mathbb{R}$ chosen for each $C_i$. Let $\alpha = (\alpha_{1}, \dots, \alpha_{n}) \in \mathbb{R}^n$. We say a directed line $\ell$ is an \textcolor{blue}{$(f,\alpha)$-stabber} of the set $C_i$ if $f_{i}(\ell^+(C_i)), f_{i}(\ell^-(C_i)) \geq \alpha_{i} $. 
We say $\ell$ is an \textcolor{blue}{($f,\alpha)$-transversal} of sets $C_1, \ldots, C_n$ if it is an $(f, \alpha)$-stabber of all of the sets.
\end{definition}

By slight abuse of notation, in the rest of this paper we often refer to a single function $f: \mathcal{C}(\mathbb{R}^2) \rightarrow \mathbb{R}$ and number $\alpha \in \mathbb{R}$ for all our sets instead of writing out each $f_{i}$ and $\alpha_{i}$ explicitly. In such cases, it is to be assumed that $f$ acts as $f_{i}$, and likewise that $\alpha$ takes value $\alpha_{i}$ on the set $C_i$. However for efficiency, we omit the subscripts in the rest of this paper.

\begin{definition}
Given a monotone function $f: \mathcal{C}(\mathbb{R}^2) \rightarrow \mathbb{R}$ and a number $\alpha \in \mathbb{R}$,
a direction $v \in S^1$ is an \textcolor{blue}{$(f,\alpha)$-stabbing direction} for sets $C_1,...,C_n \in \CR$ if there exists a line $\ell$ in the direction of $v$ such that 
$\ell$ is an $(f, \alpha)$-transversal of $C_1, \ldots, C_n$.
Conversely, $v$ is an \textcolor{blue}{$(f, \alpha)$-separation direction} if there exists some line $\ell$ in the direction of $v$ such that for some $1\leq i,j\leq n$ either 
$f(\ell^+(C_i)), f(\ell^-(C_j)) < \alpha$ or $f(\ell^-(C_i)), f(\ell^+(C_j)) < \alpha$. 
This line is called an \textcolor{blue}{$(f,\alpha)$-separation line}. (See \cref{fig:transversal-separation}.)
\end{definition}

\begin{assumption}
\label{rmk:all directions stab}
    For the rest of this paper, assume $\mathcal{F}$ refers to a finite family of sets in $\CR$ equipped with monotone function $f$ and value $\alpha$ which satisfy the following:
   For all $C \in \mathcal{F}$, every direction $v\in S^1$ is an $(f,\alpha)$-stabbing direction for $C$.
\end{assumption} 

 Then for any line $\ell$ and set $C \in \mathcal{F}$, at most one of $\ell^+(C)$ and $\ell^-(C)$ is less than $\alpha$. 
If $\ell^+(C) < \alpha$ we say $C$ is \textcolor{blue}{on the right} side of $\ell$, and if $\ell^-(C) < \alpha$ we say it is \textcolor{blue}{on the left} side.
We say sets $A,B$ are \textcolor{blue}{$(f,\alpha)$-{separated}} if there exists an $(f,\alpha)$-separation line with $A$ on one side and $B$ on the other. 

\begin{figure}[h]
       \begin{center}
       \includegraphics[width=1\textwidth]
        {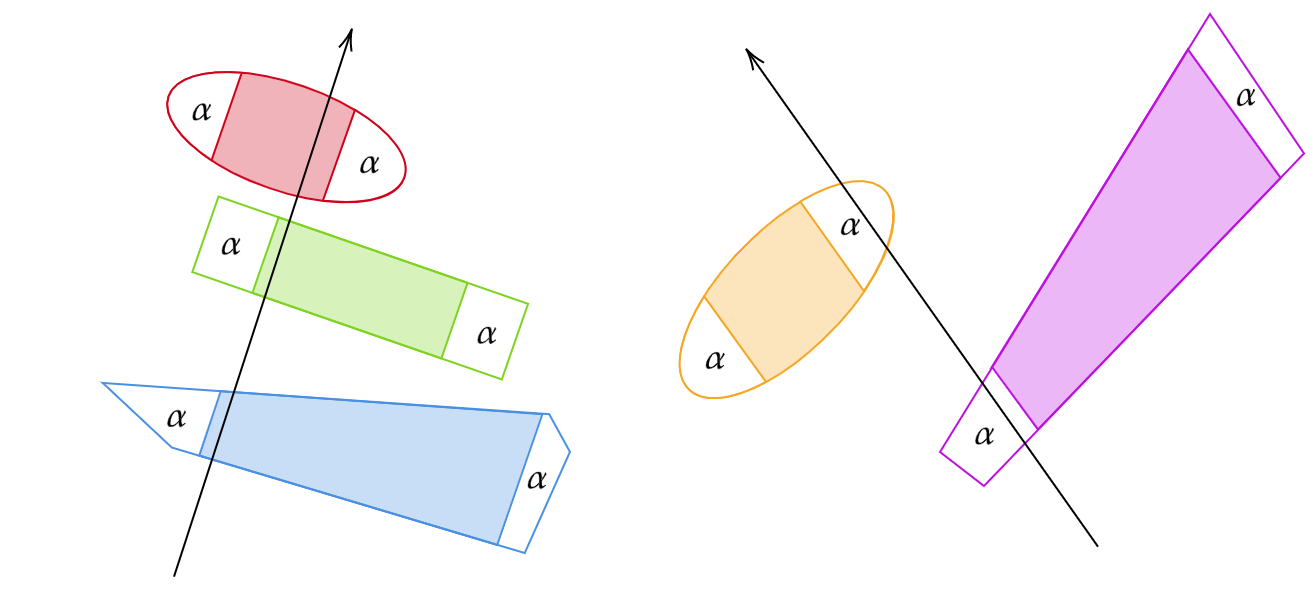}
        \caption{\label{fig:transversal-separation} Left: an $(f,\alpha)$-transversal for three convex sets. Right: an $(f,\alpha)$-separation line for two convex sets with the ellipse on the left side and the quadrilateral on the right side. (Remember that $\alpha$ represents a vector assigning an $\alpha_i$ value to each set, with these values potentially differing by set.)} 
        \end{center}
        \end{figure}

Using this notation of $(f, \alpha)$-stabbing/separating, we can now obtain the following quantitative version of \cite[Lemma 1]{Wenger.1990}. 

\begin{lemma}
\label{1dhellyuse}
 Given sets $C_1,...,C_n$ in $\mathcal{F}$, a direction $v\in S^1$ is an $(f,\alpha)$-stabbing direction for each pair of these sets if and only if it is an $(f,\alpha)$-stabbing direction for all the sets. 
\end{lemma}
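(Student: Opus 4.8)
The plan is to reduce this to the classical one-dimensional Helly theorem by projecting onto the direction orthogonal to $v$. Fix a direction $v \in S^1$, and let $\pi_v : \mathbb{R}^2 \to \mathbb{R}$ denote orthogonal projection onto the line $v^\perp$ (so two points have the same $\pi_v$-image exactly when the line through them is parallel to $v$). A line $\ell$ parallel to $v$ is determined by a single real parameter $t = \pi_v(\ell)$, and for a convex set $C$ the condition that $\ell$ be an $(f,\alpha)$-stabber of $C$ cuts out a subset $I_C \subseteq \mathbb{R}$ of admissible parameters $t$; then $v$ is an $(f,\alpha)$-stabbing direction for a subfamily $\mathcal{G}$ precisely when $\bigcap_{C \in \mathcal{G}} I_C \neq \emptyset$. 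So the forward direction (stabbing for all $\Rightarrow$ stabbing for each pair) is trivial, and the content is the converse: if every pair $I_{C_i} \cap I_{C_j} \neq \emptyset$, then $\bigcap_i I_{C_i} \neq \emptyset$. By Helly's theorem on $\mathbb{R}$ (equivalently, the fact that pairwise-intersecting intervals have a common point), it suffices to show that each $I_C$ is a closed interval (or more generally a convex, closed subset of $\mathbb{R}$).

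The two key claims are therefore: (1) $I_C$ is convex, and (2) $I_C$ is closed. For convexity, I would argue that as the parameter $t$ increases, the set $\ell_t^+(C)$ shrinks monotonically and $\ell_t^-(C)$ grows monotonically (with the roles of $+$ and $-$ depending on the orientation convention, but consistently). Hence $f(\ell_t^+(C))$ is monotone in $t$ (non-increasing, say, if $f$ is of the increasing type) and $f(\ell_t^-(C))$ is monotone in the opposite direction; the stabbing condition $f(\ell_t^+(C)) \ge \alpha$ and $f(\ell_t^-(C)) \ge \alpha$ then each defines a one-sided ray in $t$, and $I_C$ is their intersection, hence an interval. (If $f$ is of the decreasing type the inequalities flip but the same argument gives an interval.) For closedness, I would use that $C$ is compact and $f = f_C$ is continuous with respect to the Hausdorff metric: as $t_k \to t$, the half-plane slices $\ell_{t_k}^{\pm}(C)$ converge in Hausdorff distance to $\ell_t^{\pm}(C)$ (this needs a small argument since a slice can jump when $\ell$ passes an extreme supporting line of $C$, but for compact convex $C$ the map $t \mapsto \ell_t^\pm(C)$ is Hausdorff-continuous on the relevant range, with the degenerate empty-slice endpoints handled by the convention $f(\emptyset) = 0 < \alpha$), so $f(\ell_{t_k}^\pm(C)) \to f(\ell_t^\pm(C))$ and the inequalities pass to the limit.

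Granting (1) and (2), the proof finishes in one line: the $I_{C_i}$ are closed intervals in $\mathbb{R}$ that pairwise intersect, so by the one-dimensional Helly theorem they have a common point $t^*$, and the line $\ell_{t^*}$ parallel to $v$ is an $(f,\alpha)$-transversal of all of $C_1,\dots,C_n$, i.e.\ $v$ is an $(f,\alpha)$-stabbing direction for the whole family.

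I expect the main obstacle to be the Hausdorff-continuity of $t \mapsto \ell_t^\pm(C)$ near the endpoints of the range where the line is tangent to $C$: there the slice degenerates to a lower-dimensional set or to $\emptyset$, and one must check the slice does not jump discontinuously and that the endpoint behavior is consistent with the convention $f(\emptyset)=0$. This is exactly where compactness of $C$ (ruling out pathologies at infinity) and the continuity hypothesis on $f$ are both used, and it is the step I would write out most carefully; the monotonicity argument for convexity is robust and essentially bookkeeping about which side grows.
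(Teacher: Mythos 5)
Your proposal is correct and follows essentially the same route as the paper: both reduce the statement to the one-dimensional Helly theorem by projecting onto $v^\perp$, with the key observation being that the set of admissible lines parallel to $v$ for each $C_i$ is an interval (in the paper, phrased as trimming each $C_i$ to a convex sub-slab $C_i'$ and projecting it to a segment; in your version, phrased as monotonicity of $f(\ell_t^\pm(C_i))$ in the parameter $t$). Your write-up is somewhat more careful about the closedness of these intervals, which the paper glosses over, but the core argument is the same.
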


\begin{proof}
The if direction is immediate.
Now assume that there exists a shared $(f,\alpha)$-stabbing direction $v$ for each pair of the sets $C_1,...,C_n$ in $\mathbb{R}^2$.
Remove from each side of the set $C_i$ the maximal convex subsets $C_i^+$ and $C_i^-$ bounded on the left and right respectively by a line in the direction of $v$ and such that $f(C_i^+),f(C_i^-)<\alpha$ (see \cref{fig:2.1proof}).
We call this new set $C_i'$. Note that for every two of the sets $C_1',...,C_n'$, the vector $v$ is still a regular stabbing direction.

Now if we project the sets $C_1',...,C_n'$ onto a line perpendicular to $v$, we get a set of line segments any two of which share a point. Helly's theorem then tells us that all the segments share a point. Extend a line $\ell$ through this point in the direction of $v$, so it intersects all the sets $C_1',...,C_n'$.
If $f(\ell^+(C_i)) < \alpha$ for some $i$, then $\ell^+(C_i)$ would have been removed from $C_i$ to obtain $C_i'$, and so $\ell$ would not intersect $C_i'$, which is a contradiction. The same argument works if $f(\ell^-(C_i)) < \alpha$ for some $i$.
Thus, we conclude that $f(\ell^+(C_i)),f(\ell^-(C_i)) \geq \alpha$ for all $i$.
\begin{figure}[h]
       \begin{center}
       \includegraphics[width=1\textwidth]
        {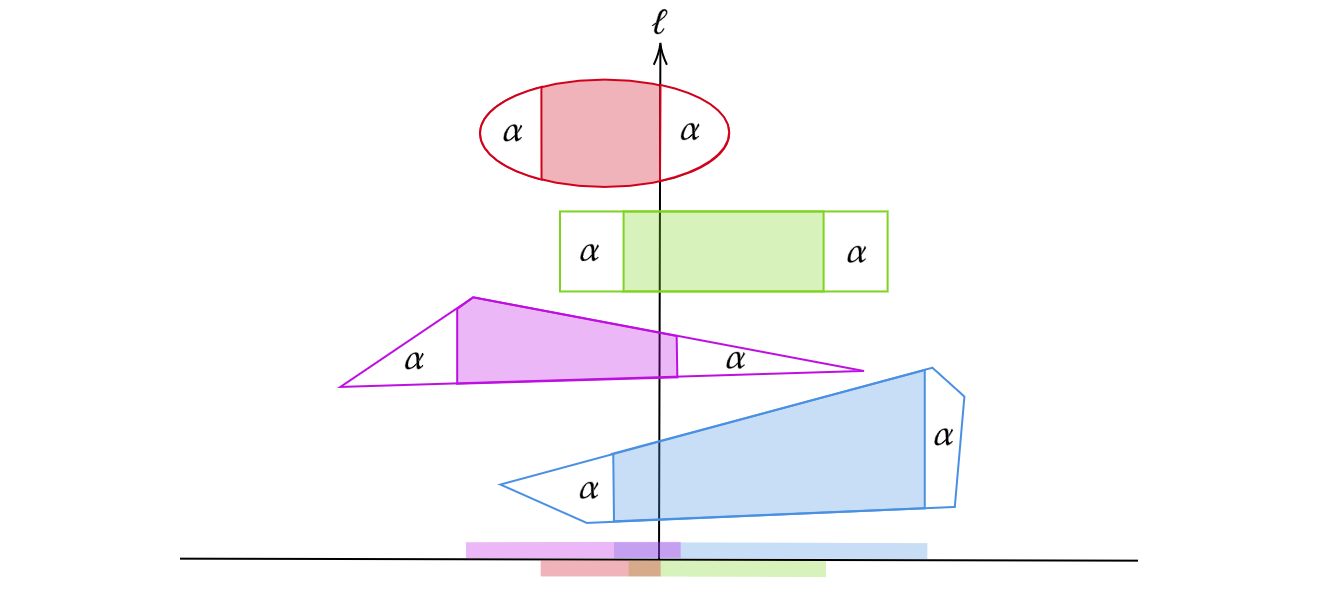}
        \caption{\label{fig:2.1proof} Convex sets $C_1,...,C_4$ with subsets $C_1',...,C_4'$ shaded in. Projections onto the perpendicular line are drawn, and the $(f,\alpha)$-transversal is going through the intersection point.}
        \end{center}
        \end{figure}
\end{proof}

\begin{corollary}
    Given sets $C_1, \dots, C_n$ in $\mathcal{F}$, each direction $v \in S^1$ is exactly one of $(f, \alpha)$-stabbing or -separation direction for all the sets.
\end{corollary}

\begin{proof}
Given a direction $v\in S^1$ and a pair of sets $C_i, C_j$ , define $C_i', C_j'$ as in the proof of \cref{1dhellyuse}. If the projections of $C_i', C_j'$ onto a line perpendicular to $v$ intersect, then $v$ is an $(f,\alpha)$-stabbing direction. Otherwise, it is an $(f,\alpha)$-separation direction. Thus, $v$ must be exactly one of $(f,\alpha)$-stabbing or $(f,\alpha)$-separation direction for each pair of sets.
If $v$ is an $(f,\alpha)$-separation direction for some pair of sets, it is an $(f,\alpha)$-separation direction for all the sets. Otherwise it is an $(f,\alpha)$-stabbing direction by \cref{1dhellyuse}.
\end{proof}

Next, we prove that the set of $(f,\alpha)$-separation directions is open. First, we define the following:

\begin{definition}
    Let $A, B \in \mathcal{F}$. Define $\textcolor{blue}{s_{AB}} \subseteq S^1$ to be the set of $(f,\alpha)$-separation directions of $A$ and $B$ for which $A$ falls on the left and $B$ falls on the right side of the corresponding $(f,\alpha)$-separation lines.
\end{definition}

\begin{remark}\label{remark:sAB-sBA}
    \cref{rmk:all directions stab} guarantees that no direction can be in both $s_{AB}$ and $s_{BA}$ for any pair of sets $A,B$.
\end{remark}

\begin{lemma}\label{lem:open}
    For any two sets $A, B\in \mathcal{F}$, the set $s_{AB}$ is open in $S^1$.
\end{lemma}

\begin{proof}
    If $s_{AB}$ is empty, it is open. Otherwise, consider a direction $v\in s_{AB}.$ We will find an open interval around $v$ in $S^1$ still contained in $s_{AB}$. We know there is some line $\ell$ in the direction of $v$ such that $A$ lies to its left and $B$ to its right. Let $A' = \ell^-(A)$ and $B'=\ell^+(B)$. Then $f(B')<\alpha$, so $f(B')=\alpha-\epsilon$ for some $\epsilon>0.$ Consider a point $p\in \ell$ such that both $A$ and $B$ lie after $p$ (when traversing $\ell$ in the direction $v$). Let $\ell_{\theta}$ be the line obtained by rotating $\ell$ clockwise by an angle of $\theta$ around the point $p$ (for some small $\theta>0$).
    See \cref{fig:2.4proof}.
    Let $A'_{\theta} = \ell_{\theta}^- (A)$ and $B'_{\theta} = \ell_{\theta}^+ (B)$. We have $A'_{\theta}\subseteq A'$ for any $\theta>0,$ so $A$ will always lie to the left of $\ell_{\theta}.$ Now, for $B'_{\theta},$ note that as $\theta \to 0,$ we have $B'_{\theta}\to B',$ and $B'\subseteq B'_{\theta}$. Then since $f$ is monotone, we have $f(B'_{\theta})\geq f(B')$. By \cref{remark:hasudorffmetric} (continuity of $f$ with respect to Hausdorff distance), we then have $f(B'_{\theta})\to f(B').$ In particular, there is some angle $\phi>0$ such that $f(B'_{\phi})<f(B')+\epsilon=\alpha$. Thus, for all $0 \leq \theta < \phi$, we have that $B$ lies to the right of $\ell_{\theta}.$

    We have an analogous argument for $\theta$ going counterclockwise around $p$, where now enough of $B'_{\theta}$ is always to the right, but we have to ensure small enough $A'_{\theta}$. By picking the smallest of the two values of $\phi$ from either direction, we get an open interval around $v$ in $S^1$ still contained in $s_{AB}.$ Thus $s_{AB}$ is open.
    \begin{figure}[h]
       \begin{center}
       \includegraphics[width=1\textwidth]
        {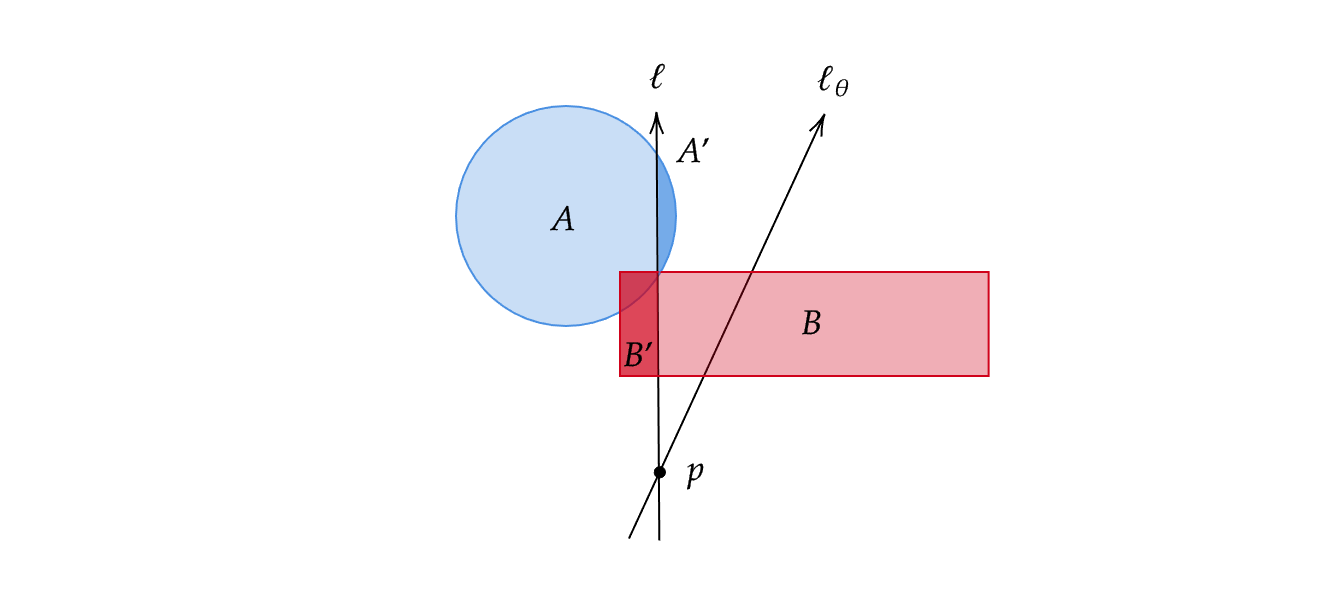}
        \caption{\label{fig:2.4proof} Sets $A$ and $B$, along with a point $p\in \ell$ and a line $\ell_\theta$, obtained by rotating $\ell$ clockwise. The darker shaded area of $A$ is $A'=\ell^-(A)$ and the darker shaded area of $B$ is $B'=\ell^+(B)$.}
        \end{center}
        \end{figure} 
\end{proof}

\begin{corollary}\label{cor:nonemptystabbingAB}
  Any two sets in $\mathcal{F}$ have an $(f,\alpha)$-transversal.
\end{corollary}

\begin{proof}
   Since $s_{AB}$ and $s_{BA}$ are both open (\cref{lem:open}) and disjoint (\cref{remark:sAB-sBA}), there exists $v\in S^1$ with $v\notin s_{AB}\cup s_{BA}$. Thus, there is a $(f,\alpha)$-transversal for $A,B$ in direction $v$.
\end{proof}

Finally, using the concept of $(f,\alpha)$-separation lines, we provide a quantitative generalization of a consistent 1-ordering from \cite{Amenta:2017ed}. 
Note that in the non-quantitative definition of a consistent 1-ordering in the plane, 
two subfamilies $\mathcal{F}_1$ and $\mathcal{F}_2$ of $\mathcal{F}$ satisfy $conv(\mathcal{F}_1) \cap conv(\mathcal{F}_2) = \emptyset$ if and only if there exists a separation line with all sets in $\mathcal{F}_1$ on one side and all sets in $\mathcal{F}_2$ on the other.

\begin{definition}\label{def:consistent-ordering}
The family $\mathcal{F}$ has an \textcolor{blue}{$(f,\alpha)$-consistent ordering} if there exists a map $\phi: \mathcal{F}\to \mathbb{R}$ such that for any subfamilies $\mathcal{F}_1, \mathcal{F}_2 \subseteq \mathcal{F}$ with $|\mathcal{F}_1|+|\mathcal{F}_2|\leq 3$,
  if there is an $(f,\alpha)$-separation line with sets in $\mathcal{F}_1$ on one side and $\mathcal{F}_2$ on the other, then
  $conv(\phi(\mathcal{F}_1)) \cap conv(\phi(\mathcal{F}_2)) = \emptyset.$
\end{definition}

The following lemma is an immediate consequence of this definition which we include here for ease of reference.
\begin{lemma}\label{lem:injectivephi}
    Suppose the family $\mathcal{F}$ has an $(f,\alpha)$-consistent ordering with $\phi:\mathcal{F}\to \mathbb{R}$. Let $A,B, C \in \mathcal{F}$.
    
    \begin{enumerate}[a.]
        \item If $s_{AB}\neq \emptyset$, then $\phi(A)\neq \phi(B).$

        \item If $\phi(A) \leq \phi(B) \leq \phi(C)$, then there is no $(f,\alpha)$-separation line with $B$ on one side and $A,C$ on the other.
    \end{enumerate}

\end{lemma}

\section{Quantitative Hadwiger}\label{Wenger thrms}

In this section, we adapt proof methods of Wenger \cite{Wenger.1990} to prove a quantitative version of \cite[Theorem 1]{Wenger.1990}.
Importantly, our \cref{thrm:quantitativehadwiger} has the hypothesis ``$(f,\alpha)$-consistent ordering" for the family $\mathcal{F}$ of compact convex sets, as opposed to ``every three sets have some  $(f,\alpha)$-transversal consistent with the prescribed ordering of $\mathcal{F}$" as in Wenger's non-quantitative version. We will explore the connection between these two hypotheses in \cref{sec:StabbingOrder}.

\begin{proof}[Proof of \cref{thrm:quantitativehadwiger}] 

Suppose our family $\mathcal{F}$ has an $(f,\alpha)$-consistent ordering given by a function $\phi: \mathcal{F} \rightarrow \mathbb{R}$. For $A, B \in \mathcal{F}$ we will denote $A<B$ if $\phi(A) < \phi(B)$.
Let $S$ be the set of all $(f,\alpha)$-separation directions for some pair of sets in $\mathcal{F}$, i.e., 
\[S := \bigcup\limits_{A, B\in \mathcal{F}} s_{AB}.\]
 Provided that $S$ does not cover the entire circle, we have some direction $v_{\text{stab}}$ which is an $(f,\alpha)$-stabbing direction for all of our sets simultaneously. Then by \cref{1dhellyuse}, there is an $(f, \alpha)$-transversal for all sets in the direction of $v_{\text{stab}}$.

To prove that $S$ does not cover the entire circle, we define two subsets
\[S_1 := \bigcup\limits_{A < B | A,B \in \mathcal{F}} s_{AB} \ \ \ \ and \ \ \ \ S_2 := \bigcup\limits_{A<B | A, B\in \mathcal{F}} s_{BA}.\] 
By \cref{lem:injectivephi} (a), the set $s_{AB}$ is empty when $\phi(A) = \phi(B)$, so $S=S_1\cup S_2$. Let $A,B \in \mathcal{F}$ such that $s_{AB} \subseteq S_1$, and let $X,Y \in \mathcal{F}$ such that $s_{YX} \subseteq S_2$. We will show $s_{AB} \cap s_{YX} = \emptyset$. For contradiction, assume this is not the case. By \cref{remark:sAB-sBA}, we cannot have both $X=A$ and $Y=B$. Thus, there are two possible cases.

\textbf{Case 1:} There exist sets $A,B,C \in \mathcal{F}$ with $\phi(B)$ falling between $\phi(A)$ and $\phi(C)$, and for which $s_{AB} \cap s_{CB} \neq \emptyset$ (note that if $s_{AB} \cap s_{CA}\neq \emptyset$, then every vector in the intersection is also in $s_{CB}$). For each vector $v\in s_{AB} \cap s_{CB}$, there exists an $(f,\alpha)$-separation line in the direction of $v$ with $B$ on one side and $A,C$ on the other. By \cref{lem:injectivephi} (b), this contradicts $\phi$ being an $(f,\alpha)$-consistent ordering of $A,B$ and $C$. 

\textbf{Case 2:}
We have distinct $A,B,X,Y \in \mathcal{F}$ with $A<B$ and $X<Y$, and such that $s_{AB} \cap s_{YX} \neq \emptyset$. For $v \in s_{AB} \cap s_{YX}$, we can find parallel lines $\ell,\ell'$ in the direction of $v$ such that 
 $A$ is on the left of $\ell$ while $B$ is on the right, and $Y$ is on the left of $\ell'$ while $X$ is on the right. (See \cref{ABYX} for positioning reference.) Here we assume $\ell$ falls to the left of $\ell'$, but the proof goes the same way if they are flipped. If $X < A$, then $X$ and $B$ are on the right of $\ell$ while $A$ is on the left. Then by \cref{lem:injectivephi} (b), $X, A, B$ cannot have an $(f,\alpha)$-consistent ordering witnessed by $\phi$ in the desired order. Similarly, if $A<X$, then $A$ and $Y$ are on the left of $\ell'$ while $X$ is on the right, so again we cannot find $\phi$ for $A,X, Y$ that is compatible with the desired order. By \cref{lem:injectivephi}(a), we can never have $\phi(A) = \phi(X)$, since $A$ and $X$ are $(f,\alpha)$-separated by $\ell$.

\begin{figure}[h]
       \begin{center}
       \includegraphics[width=.7\textwidth]
        {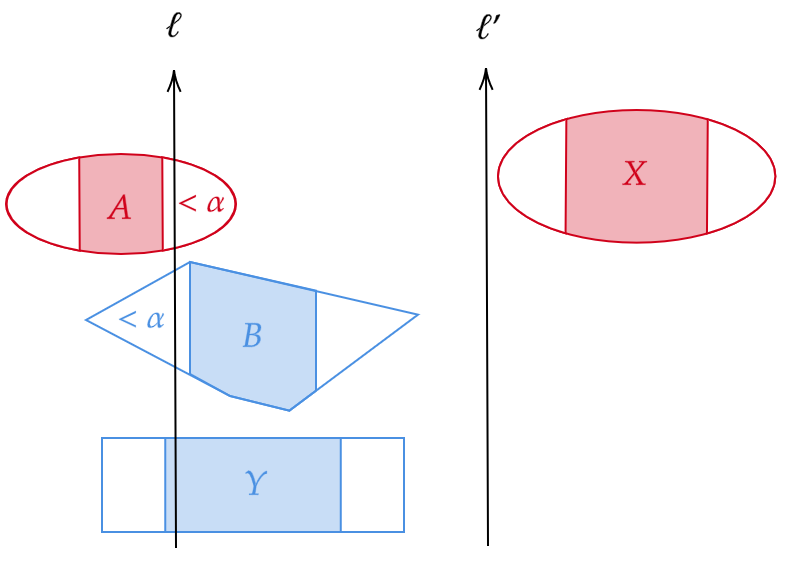}
        \caption{\label{ABYX} Sets with nonempty intersection of $S_1$ and $S_2$. The line $\ell$ separates $A$ and $B$, and the line $\ell'$ separates $X$ and $Y$. Note that no $(f,\alpha)$-transversal exists in the order provided for the sets $X$, $A$, $B$ or $A$, $X$, $Y$.} 
        \end{center}
        \end{figure}
 
Thus, we conclude that $S_1 \cap S_2 = \emptyset$. Neither $S_1$ nor $S_2$ can individually cover the entire circle, because for $v \in s_{AB} \subseteq S_1$ we have $-v \in s_{BA} \subseteq S_2$ and therefore $-v \notin S_1$. Both $S_1$ and $S_2$ are open in $S^1$ by \cref{lem:open}, and two disjoint open sets cannot cover the entire circle, so there must be some direction in $S^1$ contained in neither $S_1$ nor $S_2$ as desired (see \cref{fig:stabbingdirection}).
\begin{figure}[h]
       \begin{center}
       \includegraphics[width=1\textwidth]
        {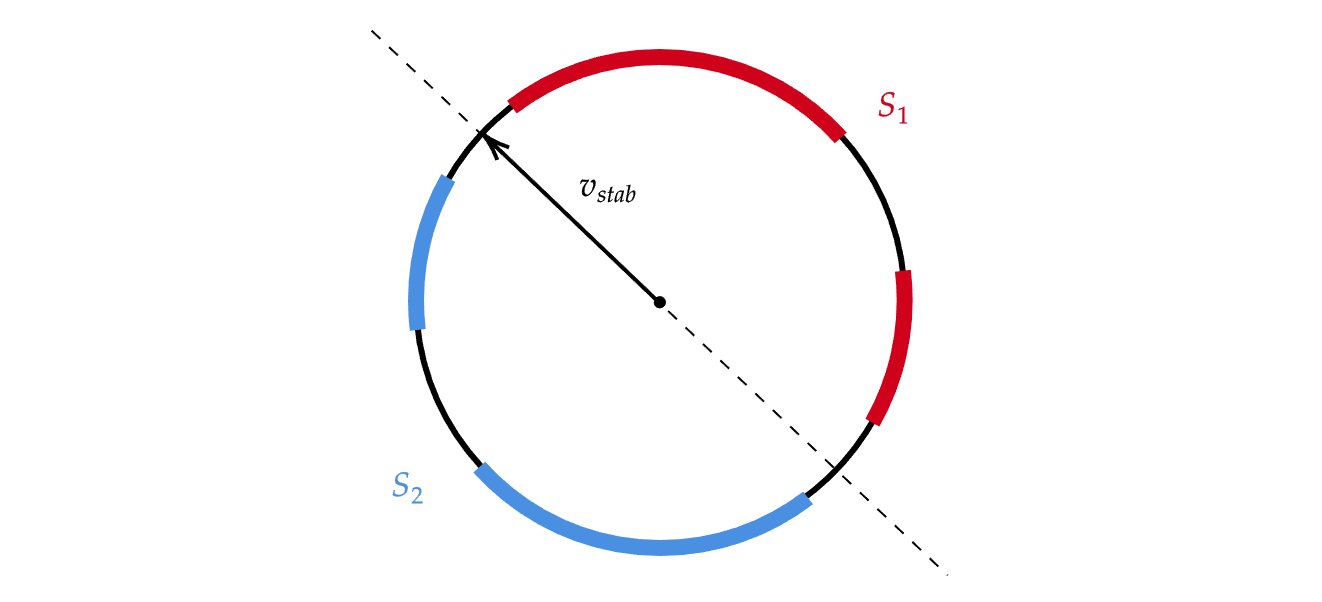}
        \caption{\label{fig:stabbingdirection} The sets $S_1$ (thick red arcs on the right side of $v_{\text{stab}}$) and $S_2$ (thick blue arcs on the left side of $v_{\text{stab}}$) for some family of convex sets drawn on a circle. The arcs of $S_1$ and $S_2$ do not intersect, so we are guaranteed stabbing directions for all sets. We picked one of these to be $v_{\text{stab}}$.} 
        \end{center}
        \end{figure}
\end{proof}

\section{$(f,\alpha)$-stabbing order}\label{sec:StabbingOrder}

\subsection{$(f,\alpha)$-consistent ordering versus $(f,\alpha)$-transversals}\label{subsec:4.1}

Wenger \cite{Wenger.1990} states the hypothesis of Theorems 1, 2 and 3 (the non-quantitative versions of our Theorems \ref{thrm:quantitativehadwiger}, \ref{thrm:6stabber}, and \ref{thrm:4stabber}) as requiring an appropriate transversal for every three sets, rather than in terms of a consistent ordering. Formally, he defined the following:
\begin{definition}\label{def:stabinorder}
    Given an ordered family $\mathcal{F}$ of sets, we say that a transversal $\ell$ \textcolor{blue}{stabs the sets in order} if, for every pair of disjoint sets $A$ and $B$ in $\mathcal{F}$ with $A<B$, the line $\ell$ intersects $A$ before $B$.
\end{definition}
In the non-quantitative case, a finite family $\mathcal{F}\subset \mathcal{C}(\mathbb{R}^2)$ has a consistent 1-ordering if and only if there exists some order on the sets in $\mathcal{F}$ such that any three sets have a transversal that stabs them in that order \cite{Wenger.1990,Pollack:1990cna}.
Unlike the non-quantitative case, however, it is possible to have three sets with an $(f,\alpha)$-transversal but no $(f,\alpha)$-consistent ordering.

\begin{example}
\label{ex:three way separated}
Consider the following sets (shown in \cref{fig:three way separated}): Let $A$ be an equilateral triangle of unit area with rightmost edge parallel to the vertical axis. Draw a line $\ell_T$ parallel and slightly to the left of this edge. Let $B$ be the triangle obtained by reflecting $A$ over the line $\ell_T$.
We draw the line $\ell_1$ containing the bottom left corner of $A$ and top corner of $B$, and the line $\ell_2$ containing the top corner of $A$ and bottom corner of $B$.  Each $\ell_i$ splits both $A$ and $B$ into two sets. In each case, the set containing the vertical edge has area $a < 1/2$, and the other has area $1-a$.

\begin{figure}
       \centering
       \includegraphics[width=1\textwidth]
        {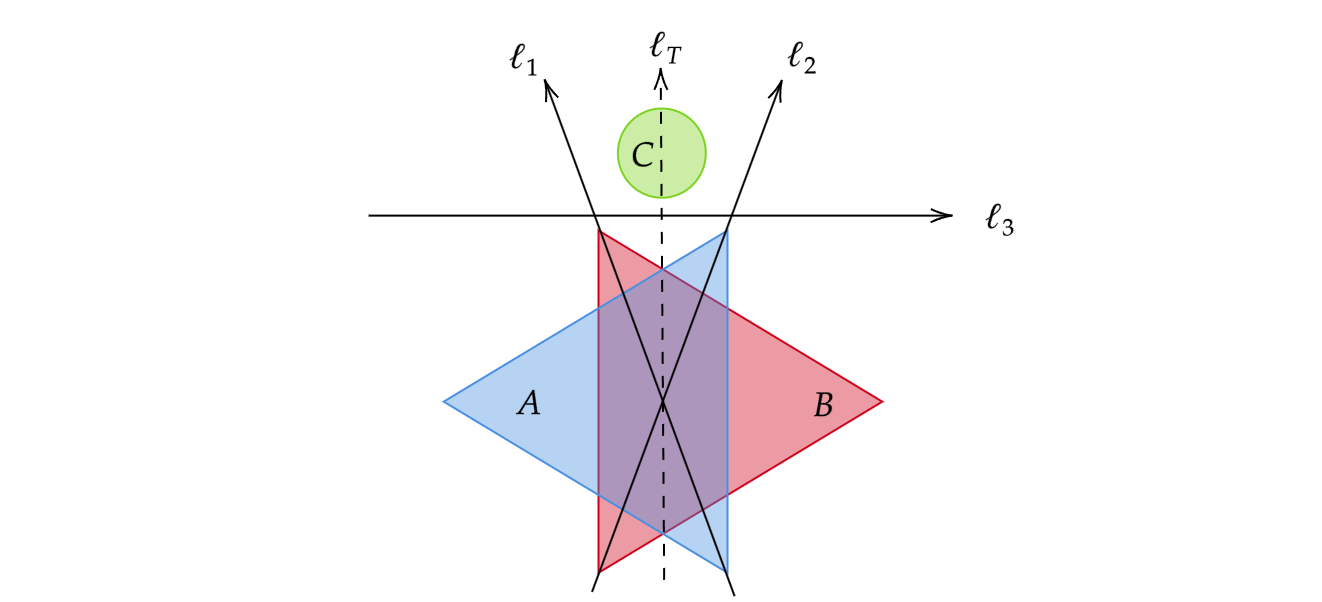}
        \caption{Three sets with an $(f,\alpha)$-transversal $\ell_T$, but no $(f,\alpha)$-consistent ordering, because each set has a line ($\ell_1, \ell_2$ or $\ell_3$) that $(f,\alpha)$-separates it from the other two.} 
        \label{fig:three way separated}
        \end{figure}

 Let $f$ be the area function and  $\alpha = a + \epsilon$ for some sufficiently small epsilon. Then both $\ell_1$ and $\ell_2$ separate the two sets with $A$ on the left and $B$ on the right in both cases. However note that sets on either side of the line $\ell_T$ have area $>\alpha$, so for small enough $\epsilon$, the line $\ell_T$ is an $(f,\alpha)$-transversal of $A$ and $B$.

 Finally, draw any set $C$ above both $A$ and $B$ separated by a line $\ell_3$, and which is $(f,\alpha)$-stabbed by $\ell_{T}$, but not by $\ell_1$ or $\ell_2$. The transversal $\ell_{T}$ stabs all three sets, but since all three sets have a line which $(f,\alpha)$-separates them from the other two, \cref{lem:injectivephi}(b) dictates that there can be no $(f,\alpha)$-consistent ordering of the sets.

\end{example}

 So what went wrong? How can we guarantee three sets with an $(f,\alpha)$-transversal will have an $(f,\alpha)$-consistent ordering? Notice that in \cref{ex:three way separated}, the set $s_{AB}$ is disconnected; the vectors in the directions of lines $\ell_1$ and $\ell_2$ are in two different, disconnected arcs within $s_{AB}$. In general, given any sets $A,B$ with disconnected $s_{AB}$, we can easily find a third set $C$ such that $A,B,C$ have an $(f,\alpha)$-transversal but no $(f,\alpha)$-consistent ordering.

 Thus, when considering the equivalence between an $(f,\alpha)$-consistent ordering and $(f,\alpha)$-transversal,
 it makes sense to restrict ourselves to the case where $s_{AB}$ is a single, connected arc for all pairs $A,B$ in our family $\mathcal{F}$. In this case, we say that a family $\mathcal{F}$ satisfies the \textcolor{blue}{single arc condition}. This gives us a way to define an $(f,\alpha)$-stabbing order on pairs of sets.
 Given a direction $v \in S^1$, let $\textcolor{blue}{\text{Left}(v)} \subseteq S^1$ be the open arc with boundary points $\pm v$ which falls counterclockwise of $v$. Similarly define $\textcolor{blue}{\text{Right}(v)}$.
 
 \begin{definition}
 \label{def:separated stab order}
     Let $A,B \in \mathcal{F}$. 
     Let $\ell$ be an $(f,\alpha)$-transversal in the direction $v \in S^1$. We say $\ell$ \textcolor{blue}{$(f,\alpha)$-stabs the sets in the order} $A<B$ if $s_{AB} \subseteq \text{Left}(v)$.
   Given an ordering on $\mathcal{F}$, we say a transversal $(f,\alpha)$-stabs all of the sets in $\mathcal{F}$ in order if it $(f,\alpha)$-stabs every pair of sets in order. 
    
 \end{definition}

Note that if two sets $A,B$ are not $(f,\alpha)$-separated, i.e., $s_{AB} = \emptyset$, then any $(f,\alpha)$-transversal $(f,\alpha)$-stabs the sets in both order $A<B$ and $B<A$. In the non-quantitative case, this definition is equivalent to regular stabbing order from \cref{def:stabinorder}: For disjoint sets $A,B$, if a transversal $\ell$ (in direction $v$) intersects $A$ before $B$, then all separation directions with $A$ on the left and $B$ on the right lie in $\text{Left}(v)$. If $A$ and $B$ are not disjoint, then a transversal is compatible with either order.

Back to the quantitative case, for $(f,\alpha)$-separated $A,B$, as long as $s_{AB}$ is a single arc, any $(f,\alpha)$-stabbing direction $v \in S^1$ will always fall clockwise of every point in $s_{AB}$ or counterclockwise of every point, while $-v$ does the opposite.
Thus, an \textcolor{blue}{ $(f,\alpha)$-stabbing order} for the pair always exists.
See \cref{fig:stabbing direction} for an example. By contrast, in \cref{fig:three way separated}, the line $\ell_T$ falls clockwise of $\ell_1$ but counterclockwise of $\ell_2$, so it does not stab the sets $A,B$ in either order.

\begin{figure}[h]
    \centering
    \includegraphics[width=1\linewidth]{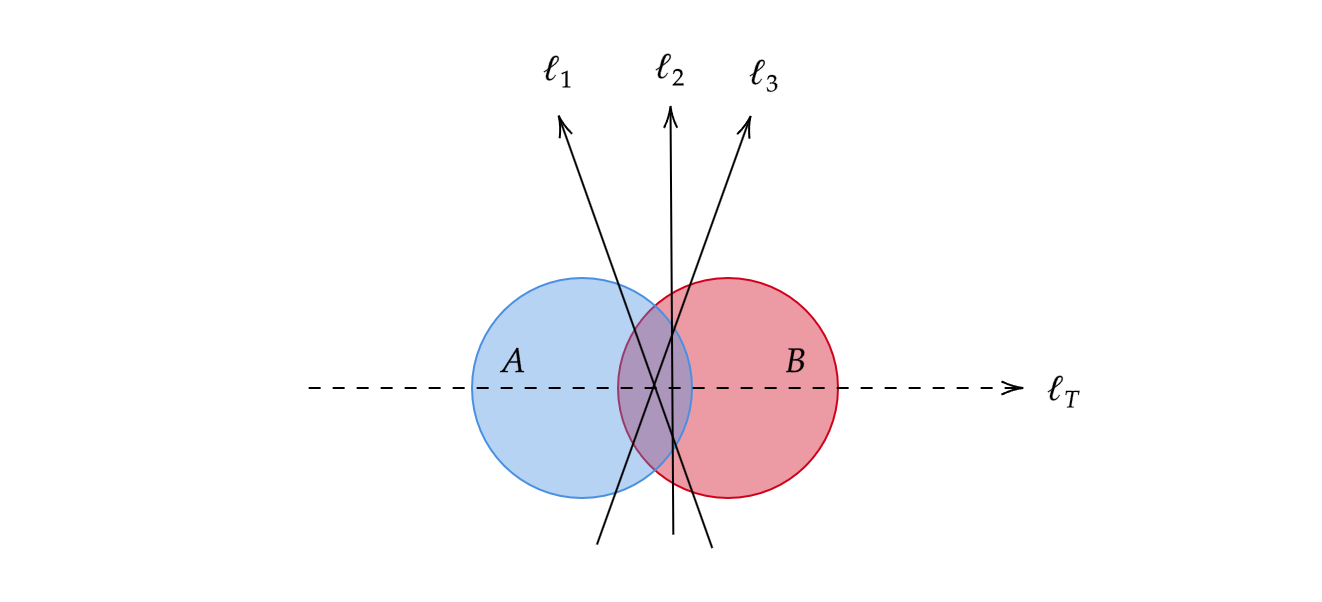}
    \caption{Two sets $A,B$ with $(f,\alpha)$-separation lines $\ell_1,\ell_2,\ell_3 \in s_{AB}$, as well as an $(f,\alpha)$-transversal $\ell_T$ that $(f,\alpha)$-stabs them in the order $A<B$.}
    \label{fig:stabbing direction}
\end{figure}

In addition, if the $(f,\alpha)$-stabbing ordering relation is transitive, then we can extend the pairwise $(f,\alpha)$-stabbing order of an $(f,\alpha)$-transversal to a total order on all the sets it stabs. We state this as a conjecture:

\begin{conjecture}\label{conj:transitiveorder}
    Let $\mathcal{F}$ be a family of sets satisfying the single arc condition.
    Let $A, B, C\in \mathcal{F}$.
    Let $\ell$ be an $(f,\alpha)$-transversal for $\mathcal{F}$ in the direction $v\in S^1$.
    If $s_{AB}, s_{BC}\subseteq \text{Left}(v)$, then $s_{AC}\subseteq \text{Left}(v)$.
\end{conjecture}
\noindent
We say that a family $\mathcal{F}$ satisfies the \textcolor{blue}{transitivity condition} if \cref{conj:transitiveorder} holds. For the purpose of this paper, when we care about order, it will be enough to require that a transversal $(f,\alpha)$-stabs every pair of sets in agreement with a preexisting order on $\mathcal{F}$ without needing the result of \cref{conj:transitiveorder}. Additionally, when all sets in $\mathcal{F}$ are disjoint, the transitivity condition holds.

\begin{proposition}\label{prop:disjoint-singlearc}
Let $\mathcal{F}$ be a family of pairwise disjoint sets satisfying \cref{rmk:all directions stab}. 
    Then, $\mathcal{F}$ satisfies the single arc condition. Furthermore, the $(f,\alpha)$-stabbing order of a transversal $\ell$ is given by the order in which $\ell$ intersects the sets,
so the $(f,\alpha)$-stabbing order is transitive.
\end{proposition}
\begin{proof}
    Let $A, B\in\mathcal{F}$.
    As $A, B$ are convex and disjoint, there exists a directed line $\ell$ disjoint from $A$ and $B$ such that $A$ lies fully to the left of $\ell$ and $B$ to the right.
    Let $u\in s_{AB}$ be the pointing direction of $\ell$. To prove $s_{AB}$ is a single arc, we want to show that for any two points $v_1, v_2 \in s_{AB}$, there exists an arc connecting them which lies entirely in $s_{AB}$.
    By \cref{remark:sAB-sBA}, the set $s_{AB}$ cannot be the whole circle.
    Thus there exists a point $x\in S^1$ such that $x\notin s_{AB}.$
   Let $\overset{\Huge\frown}{v_1v_2}$ be the closed arc with endpoints $v_1, v_2$ which does not contain $x$. Define arcs $\overset{\Huge\frown}{v_1u}$ and $\overset{\Huge\frown}{v_2u}$ similarly. We will show that $\overset{\Huge\frown}{v_i u} \subseteq s_{AB}$ for $i=1,2$, therefore implying $\overset{\Huge\frown}{v_1v_2} \subseteq \overset{\Huge\frown}{v_1u} \cup \overset{\Huge\frown}{v_2u} \subseteq s_{AB}$.

    For $i= 1,2$, fix $(f,\alpha)$-separation line $\ell_i$ in the direction $v_i$. 
    Let $p_i = \ell_i\cap \ell$ (See \cref{fig:disjoint-singlearc} for example layout). Note that $p_1, p_2\notin A\cup B$.  Rotate $\ell_i$ around the point $p_i$ until it agrees with $\ell$, keeping its direction in $\overset{\Huge\frown}{v_iu}$. Assume without loss of generality that we rotate counterclockwise. Each directed line $\ell'$ that we rotate through along the way remains $(f,\alpha)$-separating because $\ell^+(B)\subseteq \ell_i'^+(B)\subseteq 
        \ell^+_i(B)$ with $f(\ell^+_i(B))<\alpha$
        and $\ell^-(A)\subseteq \ell_i'^-(A)\subseteq \ell_i^-(A)$ with $f(\ell_i^-(A))<\alpha$. Thus $\overset{\Huge\frown}{v_i u} \subseteq s_{AB}$ for $i=1,2$.

    Any $(f,\alpha)$-transversal of $A,B$ that intersects $A$ first in this case has a direction $v$ satisfying $s_{AB} \subseteq \text{Left}(v)$, so the $(f,\alpha)$-stabbing order satisfies $A < B$.  
    \begin{figure}[h]
       \begin{center}
       \includegraphics[width=1\textwidth]
        {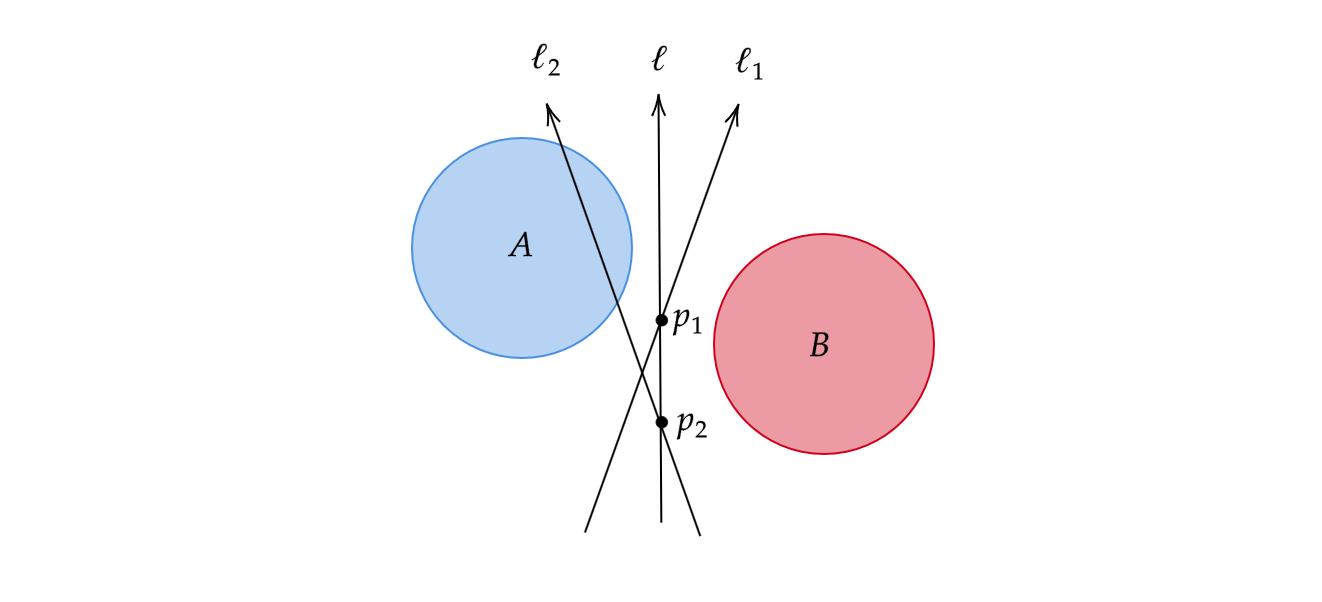}
        \caption{\label{fig:disjoint-singlearc}Given $\ell_1$ in the direction $v_1$ and $\ell_2$ in the direction $v_2$, any direction $v\in\overset{\Huge\frown}{v_1v_2}$ is also in $s_{AB}$. }
        \end{center}
        \end{figure}
\end{proof}

\begin{remark}
\label{rmk:not convex weird}
    If we define monotone functions appropriately for compact, non-convex sets (see \cref{remark:convex-connected}), then the single arc condition from \cref{prop:disjoint-singlearc} works for non-convex sets as well when we strengthen the disjointness condition to require that every pair of sets has a (non-quantitative) separation line. If additionally we require that the sets are connected, then the rest of \cref{prop:disjoint-singlearc} (as well as \cref{cor:disjoint} below)
    works as well for non-convex sets. However, if a set is disconnected, an $(f,\alpha)$-stabber may not intersect the set at all.
\end{remark}

For non-disjoint sets, the $(f,\alpha)$-stabbing order of a transversal may be different from the order in which it intersects the sets. 
(See \cref{nontransversal counterexample}.)
\begin{figure}[h]
       \begin{center}
       \includegraphics[width=.8\textwidth]
        {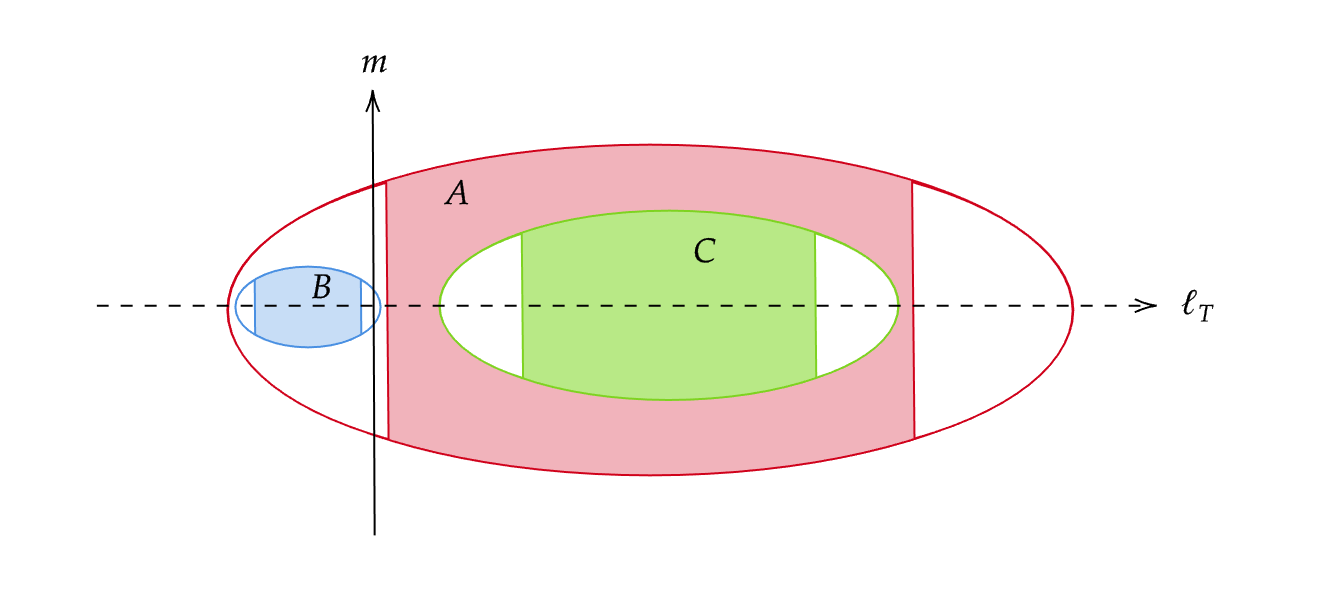}
        \caption{\label{nontransversal counterexample}Non-disjoint sets $A$, $B$ and $C$ with an $(f, \alpha)$-stabbing line $\ell_T$ and $(f,\alpha)$-separation line $m$ showing that $s_{BA}, s_{BC}\subseteq \text{Left}(\ell_T)$. Note that line $\ell_T$ intersects $A$ before $B$, but $(f,\alpha)$-stabs $B$ before both $A$ and $C$ in $(f,\alpha)$-stabbing order.}
        \end{center}
        \end{figure}

In the next few propositions, we show that for a family $\mathcal{F}$ satisfying \cref{rmk:all directions stab}, the single arc condition, and the transitivity condition, the existence of an $(f,\alpha)$-consistent ordering of $\mathcal{F}$ is equivalent to the existence of some ordering of $\mathcal{F}$ so that every three sets in $\mathcal{F}$ has an $(f,\alpha)$-transversal that $(f,\alpha)$-stabs the sets in order.

\begin{proposition}\label{prop:single+stab->consord}
Let $\mathcal{F}$ be an ordered family of sets satisfying \cref{rmk:all directions stab} and the single arc condition.
Assume that every three sets in $\mathcal{F}$ have a transversal that $(f,\alpha)$-stabs them in order. Then, $\mathcal{F}$ has an $(f,\alpha)$-consistent ordering given by the prescribed ordering of $\mathcal{F}$.
\end{proposition}

\begin{proof}
    Let $\mathcal{F}=\{C_1,\ldots,C_n\}$ with order $C_1<\ldots<C_n$. Suppose every three sets have a transversal that $(f,\alpha)$-stabs them in order. Let $\phi: \mathcal{F} \rightarrow \mathbb{R}$ be any function with $\phi(C_1)<\ldots<\phi(C_n)$.
   Towards contradiction, assume $\phi$ is not an $(f,\alpha)$-consistent ordering. Then, we can fix $A<B<C \in \mathcal{F}$ and a line $\ell$ which $(f,\alpha)$-separates the sets with $B$ on the left and $A,C$ on the right. Note that the line $\ell$ has pointing direction $v \in s_{BA} \cap s_{BC}$.
    However, by assumption, there is some line $\ell'$ that stabs sets $A,B,C$ in order, so by letting $v'\in S^1$ be the direction of $\ell'$, we have $s_{AB}, s_{BC} \subseteq \text{Left}(v')$. Thus $s_{BA} \subseteq \text{Right}(v')$, so $s_{BA}\cap s_{BC}=\emptyset$, a contradiction.
\end{proof}

\begin{proposition}\label{prop:single+trans->stab}
    Let $\mathcal{F}$ be a family of sets satisfying \cref{rmk:all directions stab}, the single arc condition, and the transitivity condition.
    Assume that $\mathcal{F}$ has an $(f,\alpha)$-consistent ordering. Then, there is an ordering of $\mathcal{F}$ so that any three sets have a transversal that $(f,\alpha)$-stabs the sets in order.
\end{proposition}

\begin{proof}
    By assumption, fix an $(f,\alpha)$-consistent ordering $\phi:\mathcal{F}\to \mathbb{R}$.
    Define an ordering on the sets in $\mathcal{F}$ by letting $A\leq B$ iff $\phi(A)\leq \phi(B)$. If $\phi(A)=\phi(B)$, the sets are not $(f,\alpha)$-separated, and we can order them arbitrarily to obtain a total order on sets in $\mathcal{F}$.
    Pick any $A < B < C\in \mathcal{F}$. 
    By \cref{lem:injectivephi} (b), there is no $(f,\alpha)$-separation line with $B$ on one side and $A,C$ on the other. Thus, $s_{AB}\cap s_{CB}=s_{BA}\cap s_{BC}=\emptyset$.
     
    Towards contradiction, assume there is no transversal that $(f,\alpha)$-stabs $A,B,C$ in order $A < B < C$. Thus, the transitivity condition implies that no transversal can have both $s_{AB}$ and $s_{BC}$ contained on the left side of its pointing direction. 
    By \cref{thrm:quantitativehadwiger}, fix an $(f,\alpha)$-transversal $\ell$ for $\mathcal{F}$.
    Let $v\in S^1$ be the direction of $\ell$.
    As $\ell$ cannot $(f,\alpha)$-stab $A,B,C$ in order $A < B < C$, we have either $s_{AB}, s_{CB}\subseteq \text{Left}(v)$ or $s_{BA},s_{BC}\subseteq \text{Left}(v)$. 
    By symmetry, we can assume $s_{AB}, s_{CB}\subseteq \text{Left}(v)$ with $s_{AB}$ falling closer to $v$ than $s_{CB}$ does (pictured in \cref{fig:sAB-sCB-u}). 
    Let $U$ be the maximal closed arc between $s_{BA}$ and $s_{BC}$ which is fully contained in $\text{Right}(v)$. Since $s_{BA}\cap s_{BC}=\emptyset$, we know $U$ is nonempty. For all $u\in U$, we have $s_{AB}, s_{BC}\subseteq \text{Left}(u)$.
    Since there is no transversal that $(f,\alpha)$-stabs $A,B,C$ in order $A < B < C$, every $u\in U$ must be in $s_{AC}$ or $s_{CA}$. 
    Since $s_{AC}$ and $s_{CA}$ are both single arcs which are non-overlaping (\cref{remark:sAB-sBA}) and open (\cref{lem:open})
    we must have $U\subseteq s_{CA}$ or $U\subseteq s_{AC}.$
    If $U\subseteq s_{AC}$, the arc $s_{AC}$ must extend beyond the endpoints of $U$ on both sides, so $s_{BA}\cap s_{AC}\neq \emptyset$ with $s_{BA}\cap s_{AC}\subseteq \text{Right}(u)$ for each $u\in U$. However, any vector in $s_{BA}\cap s_{AC}$ is also in $s_{BC}$, so it contradicts that $s_{BC}\subseteq \text{Left}(u)$.
    The contradiction follows similarly when $U\subseteq s_{CA}$.
    \begin{figure}[h]
       \begin{center}
       \includegraphics[width=\textwidth]
        {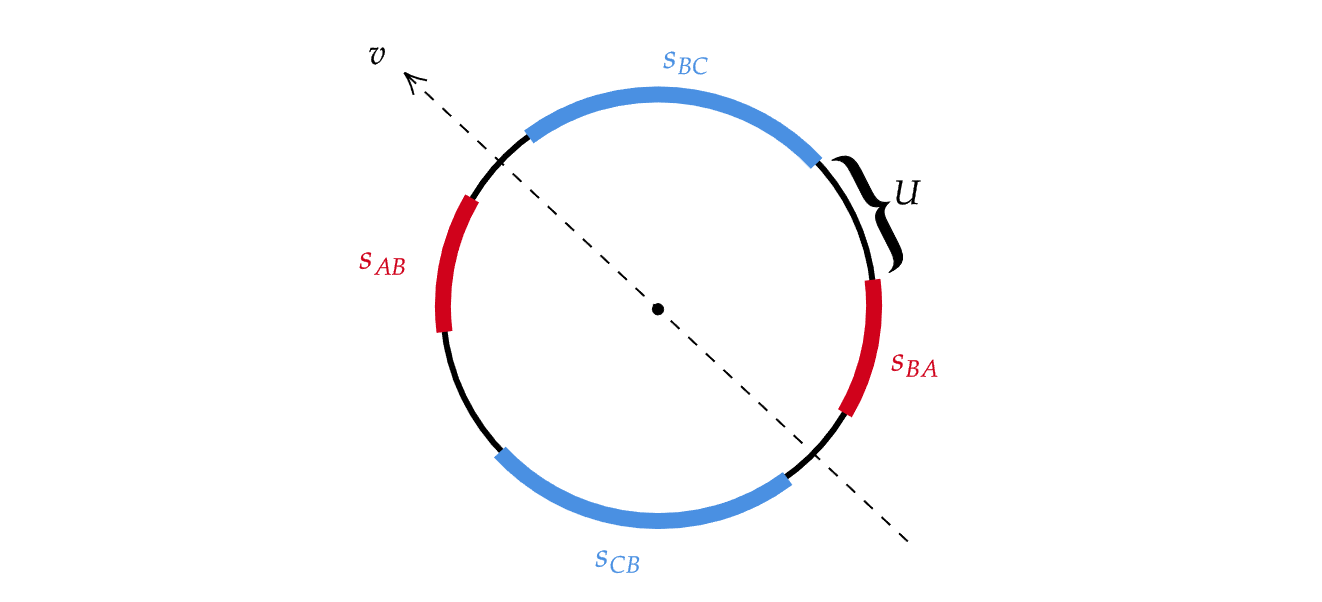}
        \caption{\label{fig:sAB-sCB-u}A direction $v\in S^1$ with $s_{AB}, s_{CB}\subseteq \text{Left}(v)$, and the closed arc $U\subseteq S^1$ with $s_{AB},s_{BC}\subseteq \text{Left}(u)$ for each $u\in U$. Note that the relative location of $s_{AB}, s_{CB}\subseteq \text{Left}(v)$ could have been flipped, but the result would be equivalent by symmetry. }
        \end{center}
        \end{figure}
\end{proof}

\begin{corollary}
\label{cor:disjoint}
  Let $\mathcal{F}$ be a family of disjoint sets satisfying \cref{rmk:all directions stab}. Then $\mathcal{F}$ has an $(f,\alpha)$-consistent ordering if and only if there exists an ordering on the sets such that every three sets have a transversal which $(f,\alpha)$-stabs the sets in order.
\end{corollary}

\begin{proof}
By \cref{prop:disjoint-singlearc}, $\mathcal{F}$ satisfies the single arc condition and the transitivity condition. 
    Therefore, the if direction follows from \cref{prop:single+stab->consord}, and the only if direction follows from \cref{prop:single+trans->stab}.  
\end{proof}

\subsection{Quantitative Hadwiger variants}

In terms of stabbing order, Hadwiger's theorem and our \cref{thrm:quantitativehadwiger} have provided conditions for when some $(f,\alpha)$-transversal exists, but do not guarantee that the transversal will intersect all of our sets in the order provided.
We now give quantitative versions of \cite[Theorems 2 and 3]{Wenger.1990} which give conditions for when we can find a transversal that $(f,\alpha)$-stabs all the sets in $\mathcal{F}$ in order. However, in order to guarantee we are in a setting where the $(f,\alpha)$-stabbing order is well defined, we will require that $\mathcal{F}$ satisfies the single arc condition. This will also be important later in the proof for Wenger's method to work in this setting.

\begin{theorem}
\label{thrm:6stabber}
Let $\mathcal{F} = \{C_1,\ldots, C_n\}$ ($n\geq 6$) be an ordered family of sets satisfying \cref{rmk:all directions stab} and the single arc condition. Suppose that for any six sets, there exists a transversal that $(f,\alpha)$-stabs all six of them in the order provided. Then, there exists a transversal that $(f,\alpha)$-stabs sets in $\mathcal{F}$ in the order provided. 
\end{theorem}

The proof of \cref{thrm:6stabber} follows directly from Wenger's original proof in \cite[Theorem 2]{Wenger.1990} when we adapt to the quantitative definitions and lemmas described in this paper.

\begin{proof}[Proof of \cref{thrm:6stabber}] 
Assume that $\mathcal{F}$ is ordered as $C_1<\cdots<C_n$.
For a pair of sets $C_i, C_j$, let $t_{C_iC_j} \subseteq S^1$ be the set of all directions of transversals which $(f,\alpha)$-stab the sets in the order $C_i<C_j$. For every pair $C_i, C_j$, the set $t_{C_iC_j}$ is nonempty. Because $s_{C_iC_j}$ is a single \emph{open} arc, by \cref{def:separated stab order}, $t_{C_iC_j}$ is a single \emph{closed} arc. Further, each $t_{C_iC_j}$ is either all of $S^1$ or has measure less than $180^{\circ}$. Every three arcs in $\{t_{C_iC_j}|i<j\}$ must intersect since we know there is a transversal that $(f,\alpha)$-stabs all six sets involved in that order simultaneously. Thus all the arcs in $\{t_{C_iC_j}|i<j\}$ share a common point. 
\cref{1dhellyuse} then gives an $(f,\alpha)$-transversal for $\mathcal{F}$ in the direction of the common point. This transversal $(f,\alpha)$-stabs each pair of sets in order.
\end{proof}

The above proof requires that every \emph{six} sets have a transversal that $(f,\alpha)$-stabs them in order. In Wenger's non-quantitative version of \cref{thrm:4stabber} (\cite[Theorem 3]{Wenger.1990}), he was able to decrease the number from six to four by adding the requirement that the sets be disjoint. We can do similarly here, but now the requirement can be weakened to the quantitative condition that all sets are $(f,\alpha)$-separated rather than necessarily disjoint.

\begin{theorem}
\label{thrm:4stabber}
Let $\mathcal{F} = \{C_1,\ldots, C_n\}$ ($n\geq 4$) be an ordered family of sets satisfying \cref{rmk:all directions stab}. Assume $s_{C_iC_j}$ is a \emph{nonempty} single arc for all $C_i,C_j \in \mathcal{F}$. Suppose that for any four sets, there exists a transversal that $(f,\alpha)$-stabs all four of them in the order provided. Then, there exists a transversal that $(f,\alpha)$-stabs all sets in $\mathcal{F}$ in the ordering provided.
\end{theorem}

\begin{proof}
Similarly to the proof of \cref{thrm:6stabber}, this proof follows directly from Wenger's original proof of \cite[Theorem 3]{Wenger.1990} when we adapt to our quantitative definitions and lemmas. 

We begin by proving \cref{thrm:4stabber} in the case where our family has exactly five sets, i.e. $\mathcal{F}=\{C_1,C_2,C_3,C_4,C_5\}$ ordered by the numbers. Any four of these sets have a transversal which $(f,\alpha)$-stabs them in order. Let $P_{1234}$ be the point on the unit circle corresponding to this $(f,\alpha)$-stabbing direction of $\{C_1,C_2,C_3,C_4\}$. Similarly name such points for all other choices of four sets. Given any three such points, say $P_{1234}, P_{1235}, P_{1245}$ without loss of generality, they all mutually share two sets in common, in this case $C_1$ and $C_2$. Thus all three points fall inside $t_{C_1 C_2}$ (as defined in the proof of \cref{thrm:6stabber}) which we again know is a single arc. Since $s_{C_1 C_2}$ is non-empty, $t_{C_1 C_2}$ cannot cover more than half the circle. Thus $P_{1234}, P_{1235}, P_{1245}$ all lie within some half circle. Suppose that the convex hull of all five points contained the center of the circle. Then by Carath\'eodory's theorem, the center of the circle lies within the convex hull of three of these five points. However, this is not possible since any three points fall within a half circle. Thus, all five points must fall within some half circle, $\theta$.

Let
$S := \{t_{C_iC_j}| 1\leq i < j \leq 5\}$. Intersect all arcs in $S$ with $\theta$ to get a set of arcs $S'$. Any pair of arcs overlap at one of the five points. Thus by Helly's theorem, the intersection of all the arcs is nonempty. This gives us a direction which is an $(f,\alpha)$-stabbing direction for all pairs in numerical order, so by \cref{1dhellyuse} there is an $(f, \alpha)$-stabber for all the sets in order.

Now suppose we have more sets in our family. We will show any six of them have a transversal which $(f,\alpha)$-stabs them in order. Then by \cref{thrm:6stabber}, there is an $(f,\alpha)$-stabber of all our sets in order. Given any six sets $\{C_1,C_2,C_3,C_4,C_5, C_6\}$, we have a transversal for any five of them which $(f,\alpha)$-stabs them in order. This gives us six points $P_{12345}, P_{12346}, P_{12356}, P_{12456}, P_{13456}, P_{23456}$ on the unit circle corresponding to those $(f, \alpha)$-stabbing directions. By the same reasoning as before, all six points lie in some half circle $\theta$. Again let $S'$ be the set of arcs obtained by intersecting $\theta$ with $S$ (defined as before, now on arcs for all sets in $\{C_1, \dots , C_6\}$). As before, all arcs share a point, which results in a transversal that $(f,\alpha)$-stabs all of the sets in order.
\end{proof}

\section{Colorful generalization}
\label{colorful}

We now work on generalizing our results to a colorful setting. In 2008, Arocha, Bracho, and Montejano \cite{Arocha:2008ti} proved the following colorful generalization for Hadwiger's theorem:

\begin{theorem}[\cite{Arocha:2008ti}]\label{thm:colorhadwiger}
    Given a finite, ordered family of compact, convex sets in the plane colored red, green, or blue; if for every triple $C_i,C_j,C_k$ of differently colored sets there is some traversal that stabs them in order, then there exists a color for which there is a transversal for all the sets of that color in the family.
\end{theorem}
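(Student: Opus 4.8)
The plan is to adapt the arguments of Wenger \cite{Wenger.1990} and Arocha--Bracho--Montejano \cite{Arocha:2008ti}: transfer everything to the circle $S^1$ of directions and resolve a combinatorial problem there using the non-quantitative version of \cref{notransversal}. For an ordered pair $i<j$ write $\Gamma_{ij}\subseteq S^1$ for the arc of directions $v$ admitting an oriented line in direction $v$ that crosses $C_i$ before $C_j$, and $\sigma_{ij}=S^1\setminus\Gamma_{ij}$; every direction in $\sigma_{ij}$ is witnessed by a line that separates the pair incompatibly with the order, i.e.\ by a separation line in the sense of \cref{notransversal}.

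\emph{Step 1: reduction to the circle.} Applying the one-dimensional Helly argument of \cref{1dhellyuse} direction by direction --- and tracking orders with the same care Wenger uses to extract an honest transversal rather than merely a common stabbing direction --- one reduces to the following purely combinatorial claim on $S^1$. For each pair of differently colored sets there is an arc $\Gamma_{ij}$ as above, and the hypothesis says that for every differently colored triple $i<j<k$ the arcs $\Gamma_{ij},\Gamma_{ik},\Gamma_{jk}$ meet at a direction realized by a single order-respecting transversal of the triple; the goal is to find a color $c$ for which the sets of that color share a stabbing direction --- equivalently, for which the separation arcs $\sigma_{ij}$ over same-colored pairs $i<j$, together with their antipodes, fail to cover $S^1$.

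\emph{Step 2: set-up of the contradiction.} Any color used on at most two sets trivially has a common transversal, so we may assume each color labels at least three sets. Suppose, toward a contradiction, that for every color the monochromatic separation arcs (with antipodes) cover $S^1$. When the sets are pairwise disjoint every $\Gamma_{ij}$ is shorter than a half-circle, so the circle-Helly fact used in the proof of \cref{thrm:6stabber} produces, for each color, three same-colored pairs whose separation arcs already cover $S^1$; without disjointness one instead fixes a minimal subcover of $S^1$ by the monochromatic separation arcs of each color and carries Wenger's additional bookkeeping. Either way one obtains, for each color, a short certificate: a handful of same-colored pairs whose separation arcs sweep once around the circle.

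\emph{Step 3: the crux --- splicing the certificates.} It remains to combine the three monochromatic certificates into a single \emph{differently colored} triple $i<j<k$ with no order-respecting transversal, contradicting the hypothesis. The target is exactly the configuration forbidden by (the disjointness-free form of) \cref{notransversal}: three sets $C_i,C_j,C_k$ of distinct colors with $i<j<k$ together with a line having $C_j$ on one side and $C_i,C_k$ on the other. Following Arocha--Bracho--Montejano, one sweeps the direction $v$ once around $S^1$ and records, for each color, which certified separation arc presently contains $v$; this exhibits, at every $v$, a same-colored separated pair of each color, witnessed by three parallel lines in direction $v$. Choosing one set per color from these pairs, and using the global order to decide which of the chosen sets must be the ``middle'' one, one must then show that at an appropriate direction the three parallel witnessing lines can be consolidated into a single line realizing the \cref{notransversal} pattern. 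Making this splicing rigorous --- bookkeeping which side of each separation line the sets fall on as $v$ rotates, and dealing with the finitely many directions where several certified arcs change simultaneously --- is the heart of the argument, and the only place where the hypothesis on \emph{differently colored} triples (rather than on all triples) is used at full strength. Once such a triple is exhibited the hypothesis is contradicted, so some color class has a common transversal.
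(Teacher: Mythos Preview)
The paper does not prove \cref{thm:colorhadwiger} --- it is quoted from \cite{Arocha:2008ti} --- but the paper's proof of the quantitative analogue \cref{thm:colorfulquanthadwiger} is exactly the Arocha--Bracho--Montejano argument in quantitative dress, so that is the natural benchmark.

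Your Steps~1 and~2 are reasonable scaffolding, but Step~3 is not a proof: you describe what must be shown (``one must then show that at an appropriate direction the three parallel witnessing lines can be consolidated into a single line'') and then say that making it rigorous ``is the heart of the argument'' without doing it. The difficulty is real. At a fixed direction $v$ your certificates hand you, for each color, a same-colored pair separated by some line parallel to $v$; that is three parallel lines and six sets, and there is no evident mechanism for merging them into \emph{one} line that places a single rainbow triple in the $(-1,1,-1)$ pattern of \cref{notransversal}. Tracking ``which side'' as $v$ rotates does not by itself produce such a consolidation, and the hypothesis on rainbow triples gives you no leverage on the same-colored pairs appearing in your certificates.

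The device you are missing --- and the actual content of the Arocha--Bracho--Montejano proof, reproduced in the paper for \cref{thm:colorfulquanthadwiger} --- is to manufacture a \emph{single canonical line} in each direction (the ``middle separating line'', determined by the quantities $u_2,v_2$ coming from the projected intervals) and to study its colorful sign vector. One then proves two combinatorial facts: if no color has a common transversal the middle vector is \emph{balanced} (Lemma~\ref{lem:balanced}), and the rainbow-triple hypothesis forces it to be \emph{Hadwiger} (Lemma~\ref{lem:corlofuldisjoint}). The purely combinatorial Lemma~\ref{lem:hadwigerbalanced} then shows that the sign of the first nonzero entry is locally constant, so the sets $S_1,S_2$ of directions with first nonzero entry $+1$ and $-1$ are open, disjoint, nonempty, and cover $S^1$ --- a contradiction. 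The middle-line construction is precisely what replaces your unspecified ``splicing'': it guarantees from the outset that a single line witnesses the failure for all three colors simultaneously, so that the \cref{notransversal} obstruction can be read off directly from the sign vector.
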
 

Similarly to Wenger's method, the method used in their paper does not require disjointness for the result to hold. In this section, we generalize their method to obtain \cref{thm:colorfulquanthadwiger}, a colorful generalization to \cref{thrm:quantitativehadwiger}. 
\begin{assumption}\label{assump3}
    For the rest of this section, assume that in addition to a family $\mathcal{F}$ satisfying \cref{rmk:all directions stab}, each set in $\mathcal{F}$ is assigned a color red, green, or blue. 
\end{assumption}
Note that if $\mathcal{F}$ has no sets in one of the three colors, then trivially any line is an $(f,\alpha)$-transversal for all sets of that color. We call a subfamily of $\mathcal{F}$ consisting of all the sets of a given color a \textcolor{blue}{monochromatic subfamily}. Now, here are colorful analogs of \cref{def:consistent-ordering} and \cref{lem:injectivephi} (b).
\begin{definition}\label{defn:colorful_consistent_ordering}
We say that the family $\mathcal{F}$ has a \textcolor{blue}{colorful $(f,\alpha)$-consistent ordering} if there exists a map $\phi: \mathcal{F}\to \mathbb{R}$ such that for any subfamilies $\mathcal{F}_1, \mathcal{F}_2 \subseteq \mathcal{F}$ with $|\mathcal{F}_1|+|\mathcal{F}_2|\leq 3$ and $\mathcal{F}_1\cup\mathcal{F}_2$ containing no two sets of the same color,
  if there is an $(f,\alpha)$-separation line with sets in $\mathcal{F}_1$ on one side and $\mathcal{F}_2$ on the other, then
  $conv(\phi(\mathcal{F}_1)) \cap conv(\phi(\mathcal{F}_2)) = \emptyset.$

\end{definition}

\begin{lemma}\label{lem:threesetscolorful}
    Suppose the family $\mathcal{F}$ has a colorful $(f,\alpha)$-consistent ordering with $\phi:\mathcal{F}\to \mathbb{R}$. Let $A,B, C\in \mathcal{F}$, no two of the same color, such that $\phi(A) \leq \phi(B) \leq \phi(C)$. Then there is no $(f,\alpha)$-separation line with $B$ on one side and $A,C$ on the other.
\end{lemma}

From a colorful $(f,\alpha)$-consistent ordering on $\mathcal{F},$ we obtain a linear ordering on the family. This is done by simply taking the ordering on the image of $\phi.$ It is possible two different sets have the same image if they are of the same color, or if they are of different colors but are not $(f,\alpha)$-separated. In either of these cases, 
arbitrarily assign a relative order on these sets. Thus, we can assume our family $\mathcal{F}$ with a colorful $(f,\alpha)$-consistent ordering comes with a numerical order $C_1, C_2 , \dots, C_n$ compatible with $\phi$.

The approach for proving \cref{thm:colorfulquanthadwiger} is similar to that of \cref{thrm:quantitativehadwiger}. 
We define two subsets $S_1$ and $S_{-1}$ of the unit circle, and show that if no monochromatic subfamily has an $(f,\alpha)$-transversal, then $S_1$ and $S_{-1}$ cover the whole circle. 
We then show that if there exists a colorful $(f,\alpha)$-consistent ordering but no monochromatic subfamily has an $(f,\alpha)$-transversal, then $S_1$ and $S_{-1}$ cannot cover the whole circle, producing a contradiction. 
 To begin, we establish some key definitions which will allow us to define our sets $S_1$ and $S_{-1}$.

Fix an origin in $\mathbb{R}^2$.
For a direction $v\in S^1$, let $\ell_v(0)$ be the line through the origin in the direction of $v$. Let $u$ be the unit vector pointing in the direction 90 degrees clockwise of $v$.
Every other line in the direction $v$ is of the form $\ell_v(c):=\ell_v(0)+c\cdot u$ for some constant $c\in \mathbb{R}$, so we can identify these lines with the real numbers. For a given $C_i\in \mathcal{F},$ let $a_i=\min_{c\in\mathbb{R}}\{C_i \text{ is } (f,\alpha)\text{-stabbed by } \ell_v(c)\}$ and $b_i=\max_{c\in\mathbb{R}}\{C_i \text{ is } (f,\alpha)\text{-stabbed by } \ell_v(c)\}$. (Note we required an $(f,\alpha)$-stabber to exist in each direction by \cref{rmk:all directions stab}, and from there a unique minimum/maximum value always exists since the set of $c$ that makes $\ell_v(c)$ stabbing is a closed set.) Now, considering only the red sets, let $p_{red}=\max_{i\text{ red}}\{a_i\}$ and $q_{red}=\min_{i\text{ red}}\{b_i\}$. Similarly, we define $p_{green},q_{green},p_{blue},q_{blue}.$ Order the set $\{p_{red},p_{green},p_{blue}\}$ from smallest to largest and let $p_{m}$ be the median one. Similarly define $q_{m}.$ (See \cref{fig:lemma4.3}.)

\begin{figure}
    \centering
    \includegraphics[scale=0.25]{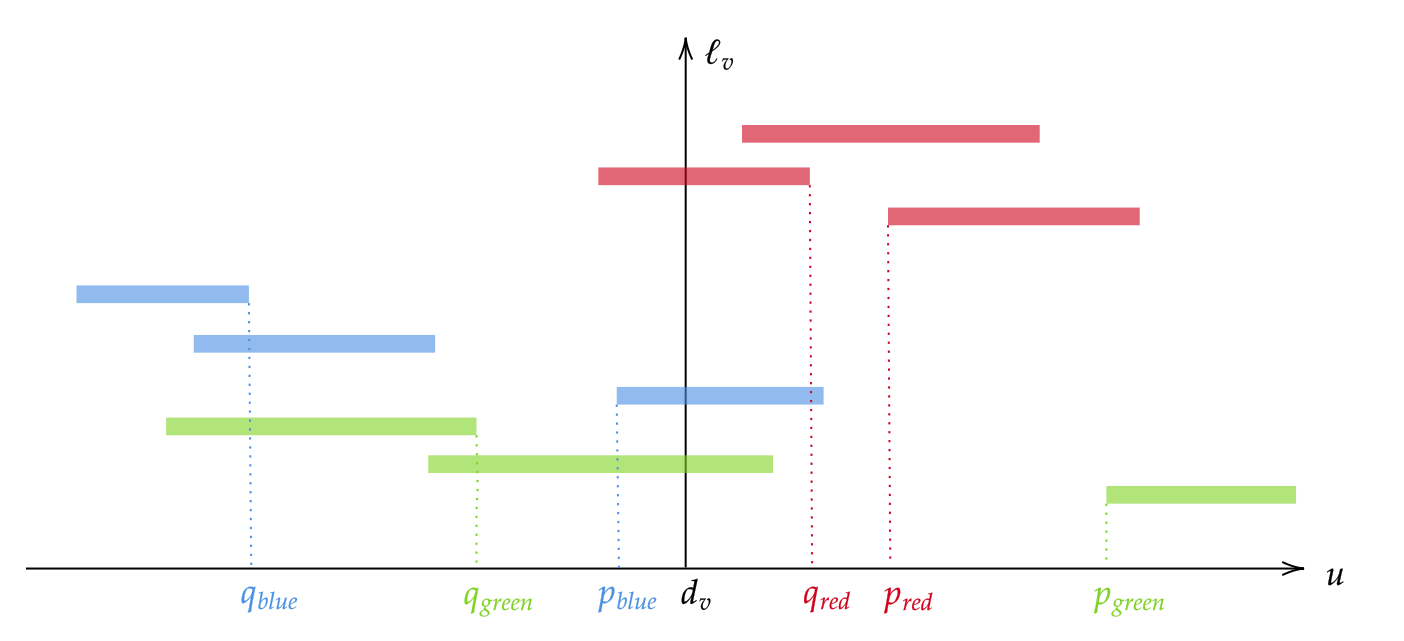}
    \caption{An example of $p_{blue},p_{red},p_{green},q_{blue},q_{red},q_{green}$ and $\ell_v$ for some colorful intervals. In this case, $p_m=p_{red}$ and $q_m=q_{green}$.} 
    \label{fig:lemma4.3}
\end{figure}

\begin{definition}\label{def:middle-f-alpha}
In the context of the previous paragraph, fix $d_v=\frac{p_{m}+q_{m}}{2}.$ We
call the line $\ell_v:=\ell_v(d_v) = \ell_v(0)+d_v\cdot{u}$ the \textcolor{blue}{middle $(f,\alpha)$-line} of $\mathcal{F}$ in the direction $v$ (see \cref{fig:lemma4.3}). We define the \textcolor{blue}{middle $(f,\alpha)$-vector} for $v$ to be the colorful vector $(x_1,\cdots,x_n),$ where $x_i$ is 0 if the line $\ell_{v}$ is an $(f,\alpha)$-stabber of $C_i,$ is $-1$ if $f(\ell_{v}^-(C_i))<\alpha,$ and $1$ if 
$f(\ell_{v}^+(C_i))<\alpha.$ The color of each coordinate is the same as the one for the respective set.
\end{definition}

We use the reference line $\ell_v(0)$ so that the definitions are easier to construct, but note that the middle $(f,\alpha)$-line for $v$ does not depend on the choice of the reference line or origin. If we substitute $\ell_v(0)$ with some other line in the direction $v$ a distance $c$ from it, all the $p$'s and $q$'s will change by $-c,$ so $\ell_v$ stays the same. Our assignment of middle $(f,\alpha)$-lines is continuous with respect to the direction $v \in S^1$.

\begin{lemma}\label{lem:continuous}
    Let $\mathcal{B}$ be a closed ball around the origin containing all sets in $\mathcal{F}$.
    Then for $v \in S^1$, the map $v\mapsto \ell_v \cap \mathcal{B}$ is continuous with respect to Hausdorff distance.
\end{lemma}

\begin{proof}
    Let $\mathcal{F}=\{C_1,\ldots,C_n\}.$
    It is enough to show that the choice of $d_v$ as in \cref{def:middle-f-alpha} is continuous with respect to the choice of $v \in S^1$. Recall our reference lines $\ell_v(0)$ are obtained from each other by rotating about the origin. Since the function $f$ is continuous (see \cref{remark:hasudorffmetric}), by rotating $\ell$ continuously around the origin, the values of $a_i$ and $b_i$ will also change continuously for every $i$ ($1\leq i\leq n$). Thus the $p$'s, $q$'s, and $d_v$'s will also change continuously. 
\end{proof}

Now, we define our subsets $S_1$ and $S_{-1}$ of the circle. We let $S_1$ be the subset corresponding to all directions whose middle $(f,\alpha)$-vector has a $1$ as its first nonzero entry, and similarly let $S_{-1}$ be the subset corresponding to all directions whose middle $(f,\alpha)$-vector has a $-1$ as its first nonzero entry. 

Note that if any point on the circle is \emph{not} in $S_1 \cup S_{-1}$, then the middle $(f,\alpha)$-line in that direction is an $(f,\alpha)$-transversal of not just one monochromatic subfamily but of all sets of all colors at once. Thus, if $\mathcal{F}$ has no monochromatic subfamily with a common $(f,\alpha)$-transversal,
then $S_1 \cup S_{-1}$ must cover the whole circle. As vectors for opposite directions have opposite signs for their first nonzero entries, both $S_1$ and $S_{-1}$ must be nonempty. So to produce a contradiction we just need to show that if $\mathcal{F}$ has no monochromatic subfamily with a common $(f,\alpha)$-transversal, then $S_1$ and $S_{-1}$ must be open (and they are clearly disjoint), meaning that they \emph{cannot} cover the entire circle.

\begin{definition}
A colorful sign vector is said to be \textcolor{blue}{balanced} if for every two colors there exists a coordinate in one of the colors with a 1 in it, and a coordinate in the other color with a $-1$ in it. That is, for any pair of colors, we can find two sets, one of each color, that ``lie'' on opposite sides of the line. 
\end{definition}

\begin{lemma}\label{lem:balanced}
If a colorful family $\mathcal{F}$ has no monochromatic subfamily with a common $(f,\alpha)$-transversal in the direction $v$, then the middle $(f,\alpha)$-vector of $\mathcal{F}$ in the direction $v$ is balanced.
\end{lemma}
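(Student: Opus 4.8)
The plan is to argue by contraposition at the level of colors: I will show that if the middle $(f,\alpha)$-vector of $C$ in the direction $d$ fails to be balanced, then some monochromatic subfamily has a common $(f,\alpha)$-transversal in the direction $d$. Fix the direction $d$ and carry out the reduction from the paragraph preceding the statement: pass to the reduced sets $C_i'$, project onto $d^\perp$ to get the colorful intervals $[a_i,b_i]$, and form the six numbers $p_r,q_r,p_b,q_b,p_g,q_g$ and their order statistics $u_1\le u_2\le u_3$ and $v_3\le v_2\le v_1$. The key translation to keep in mind throughout: a line $\ell$ in direction $d$ crossing $d^\perp$ at a point $t$ is an $(f,\alpha)$-transversal of all the red sets exactly when $t\in[p_r,q_r]$ (and likewise for blue and green); a coordinate $x_i$ equals $+1$ means the red/blue/green interval lies entirely to one side of $t$, i.e. $t\ge b_i$, and $x_i=-1$ means $t\le a_i$. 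So for the color red, the middle $(f,\alpha)$-vector having a $+1$ in some red coordinate is equivalent to $\tfrac{u_2+v_2}{2}\ge q_r$, and a $-1$ in some red coordinate is equivalent to $\tfrac{u_2+v_2}{2}\le p_r$.

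Now suppose the middle vector is \emph{not} balanced. By definition there is a pair of colors, say red and blue, such that it is \emph{not} the case that one of them has a red/blue coordinate with a $+1$ while the other has one with a $-1$. Unpacking the two symmetric failures: either neither red nor blue has a $-1$ coordinate, or neither red nor blue has a $+1$ coordinate (the mixed impossibility ``no red $+1$ and no blue $-1$, but also no blue $+1$ and no red $-1$'' collapses to these after checking cases). Consider the first: no red and no blue coordinate is $-1$ means, by the translation above, $m:=\tfrac{u_2+v_2}{2} > p_r$ and $m > p_b$; combined with $m>p_g$ being either true or the green set being irrelevant, I get $m$ strictly exceeds at least two of the three $p$'s, hence $m> u_2$. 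But the same non-balanced hypothesis, read for whichever of the three colors does have a $+1$ (and one must, unless the middle vector is all zero, in which case \emph{every} color already has a common transversal and we are done), pins $m$ on the other side. The clean way to close this is: $m>u_2$ forces, for at least one specific color $c$, that $m$ lies in $(p_c, q_c)$ — here I use $m = \tfrac{u_2+v_2}{2} \le \tfrac{u_2+v_1}{2}$ together with $u_2$ being the middle $p$, $v_2$ the middle $q$, and the disjointness-driven fact that for each color $p_c< q_c$ whenever that color's reduced sets are nonempty — and then Lemma \ref{1dhellyuse} applied to the monochromatic family of color $c$ produces the claimed $(f,\alpha)$-transversal in direction $d$, contradicting the hypothesis of the lemma. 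The symmetric case (no $+1$ among red or blue) is handled identically with the roles of $u$ and $v$, hence $p$ and $q$, swapped.

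The main obstacle, and the step to be most careful with, is the bookkeeping of the order statistics: showing that ``the middle $(f,\alpha)$-line at $\tfrac{u_2+v_2}{2}$ is an $(f,\alpha)$-transversal of \emph{every} color'' is false in general (that would be colorful Helly for intervals, which needs the balanced hypothesis to fail in a strong way), but ``it is a transversal of at least one color whenever the vector is unbalanced in a pair'' is exactly what a one-dimensional colorful-interval argument gives. Concretely I will want the sub-claim: for the middle point $m=\tfrac{u_2+v_2}{2}$, if it is not true that some color has $m\ge q_c$ and another has $m\le p_c$, then $m\in[p_c,q_c]$ for some color $c$ whose sets are all $(f,\alpha)$-stabbable in direction $d$. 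This is an elementary but slightly fiddly case analysis on which of the $p_c$ are $\le m$ and which of the $q_c$ are $\ge m$, using $u_2 = \operatorname{median}(p_r,p_b,p_g)$ and $v_2=\operatorname{median}(q_r,q_b,q_g)$; I would organize it by ``how many $p$'s lie below $m$'' (at least two, since $m\ge u_2$ in the relevant branch) and ``how many $q$'s lie above $m$'' and locate the color that wins on both counts. Everything else — the reduction to reduced sets, the interval translation, and the final invocation of Lemma \ref{1dhellyuse} — is routine once this combinatorial kernel is nailed down, and the hypothesis of disjointness enters only through $p_c<q_c$ per color and the fact (already used in the proof of Theorem \ref{thrm:quantitativehadwiger}) that a single set cannot be $(f,\alpha)$-separated from itself, which keeps the reduced intervals well-behaved.
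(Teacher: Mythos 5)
Your high-level architecture — contraposition, peel off the degenerate ``some color is all-zero'' branches, and in the remaining cases show that the middle line at $m=\tfrac{u_2+v_2}{2}$ lands in $[p_c,q_c]$ for some color — is sound, and the target sub-claim is in fact true. But the step you call ``the clean way to close'' rests on a false assertion that makes the argument circular: you claim disjointness yields ``$p_c<q_c$ whenever that color's reduced sets are nonempty.'' This is not so. Two disjoint discs stacked one above the other, projected in a horizontal direction, give two disjoint intervals, so $p_c>q_c$ even though both reduced sets are nonempty. Worse, $p_c\le q_c$ is \emph{precisely} the statement (via Lemma~\ref{1dhellyuse}) that color $c$ has a common $(f,\alpha)$-transversal in direction $d$ — the exact conclusion your contrapositive is trying to reach — so invoking it as an input is circular; and if it held for every color the $m\ge u_2$ analysis would be superfluous, since each color would already have a transversal. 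Disjointness of the original sets plays no role in this lemma; in the paper it enters only through Lemma~\ref{lem:corlofuldisjoint}.

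The missing content is the median-counting step the paper does up front: if $p_c>q_c$ for all three colors, then writing $u_1=p_i$ and $u_2=p_j$ for distinct colors $i,j$ gives $u_2\ge p_i>q_i$ and $u_2=p_j>q_j$, so $u_2$ strictly exceeds two of the three $q$'s and hence their median $v_2$; thus $u_2>v_2$. In your framing the same content is needed and is not hard: from ``no red or blue coordinate is $-1$'' you correctly get $m\ge\max(p_r,p_b)\ge u_2$, hence $v_2\ge u_2$; now suppose toward a contradiction that $m\notin[p_c,q_c]$ for every $c$ — then $m>q_r$ and $m>q_b$, so $m$ exceeds two of the three $q$'s, hence $v_2<m=\tfrac{u_2+v_2}{2}$, giving $v_2<u_2$, a contradiction. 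Neither the observation $m\le\tfrac{u_2+v_1}{2}$ nor disjointness substitutes for this. Once this step is supplied, your proposal is essentially the paper's proof recast as a contraposition with the degenerate cases handled explicitly, not a genuinely different route.
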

\begin{proof}
This proof is adapted from \cite[Lemma 3]{Arocha:2008ti} using the quantitative $p_{red},q_{red},p_{blue}, q_{blue}, p_{green}, q_{green}, p_{m},q_{m}$ as defined above, and $d_v= \frac{p_{m}+q_{m}}{2}$.
By assumption, there is no $(f,\alpha)$-transversal to the red sets in the direction $v$, so we have $p_{red}>q_{red}$. Similarly, $p_{green}>q_{green}$ and $p_{blue}>q_{blue}$. After ordering, we get $p_m>q_m$. 

Now, let $x$ be the middle $(f, \alpha)$-vector in the direction $v$, and suppose $x$ is not balanced. This means there are two colors (without loss of generality red and blue) such that every non-zero red and blue coordinate of $x$ have the same sign (say it is a 1), or the entries are all zero. Then, both $q_{red}$ and $q_{blue}$ are greater than or equal to $d_v$, since all the sets of these colors lie on the positive side of the line $\ell_v$. Since we also have $p_{m}>q_{m}$, and $d_v$ is in between $p_{m}$ and $q_{m}$, we get $d_v>q_{m}.$ But then both $q_{red}$ and $q_{blue}$ are larger than $q_{m}$, which is impossible since $q_{m}$ is smaller than exactly one of the $q$'s. Therefore, we have a contradiction, so the vector $x$ has to be balanced.
\end{proof}

Next, we show that when $\mathcal{F}$ has a colorful $(f,\alpha)$-consistent ordering, the middle $(f,\alpha)$-vectors satisfy another property called being Hadwiger. We will then get a contradiction by showing that when all our middle $(f,\alpha)$-vectors are both balanced and Hadwiger, the sets $S_1$ and $S_{-1}$ must be open.

\begin{definition}
A colorful sign vector $x$ is \textcolor{blue}{Hadwiger} if whenever we have $i<j<k$, and $x_i,x_j,x_k$ are of different colors, then $(x_i,x_j,x_k)$ is not equal to $(-1,1,-1)$ or $(1,-1,1).$
\end{definition}

Intuitively, if a middle $(f,\alpha)$-vector is Hadwiger, then the corresponding middle $(f,\alpha)$-line does not $(f,\alpha)$-separate any three differently colored sets with the middle set on one side and the first and last sets on the other side.

\begin{lemma}\label{lem:corlofuldisjoint}
If a family $\mathcal{F}$ satisfies the conditions for \cref{thm:colorfulquanthadwiger}, then for any direction $v$, the corresponding middle $(f,\alpha)$-vector is Hadwiger.
\end{lemma}
\begin{proof}
This is a direct consequence of \cref{lem:threesetscolorful}. If there was a direction such that the middle $(f,\alpha)$-vector in that direction was not Hadwiger, we would have three sets $A, B, C$, no two of the same color, with $\phi(A)\leq \phi(B)\leq \phi(C)$ and an $(f,\alpha)$-separation line with $B$ on one side and $A, C$ on the other.
\end{proof}

Now, consider a partial ordering on the middle $(f, \alpha)$-vectors which relates vectors for middle $(f, \alpha)$-lines that separate sets in similar ways as each other. Given two $n$-dimensional vectors, we say $(x_1,x_2,\cdots,x_n)\preceq (y_1,y_2,\cdots,y_n)$ if and only if $x_i\neq 0$ implies $x_i=y_i$ for every $i.$ For middle $(f, \alpha)$-vectors in particular, it tells us the following:
Let $x,y$ be middle $(f,\alpha)$-vectors of some family $\mathcal{F}$, obtained from the middle $(f,\alpha)$-lines $\ell_u$ and $\ell_{v}$ in directions $u, v$ respectively. If $x \preceq y$, then 
$\ell_u$ may $(f,\alpha)$-stab sets which $\ell_v$ does not, but otherwise all sets fall to the same side of $\ell_u$ as of $\ell_v$.  With this partial order, and the following lemma from \cite{Arocha:2008ti}, we can finish the proof of \cref{thm:colorfulquanthadwiger}.

\begin{lemma}[\cite{Arocha:2008ti}, Lemma 4]
\label{lem:hadwigerbalanced}
If $x,y$ with $x\preceq y$ are both balanced and Hadwiger colored sign vectors,
then their first non-zero entry is the same (or all of the entries are zero).
\end{lemma}

\begin{proof}[Proof of Theorem 1.2]
Recall we have defined the sets $S_1$ and $S_{-1}$, and shown already that if there is no monochromatic subfamily which has an $(f,\alpha)$-transversal to all sets of that color, then $S_1$ and $S_{-1}$ cover the whole circle, are disjoint, and that $S_1, S_{-1} \neq \emptyset$. To finish the proof, we will provide a contradiction by showing that if additionally the conditions of \cref{thm:colorfulquanthadwiger} are met, then the sets $S_1$ and $S_{-1}$ are open. In this case, since the circle is connected, they cannot possibly cover the whole circle which gives us our contradiction.

To prove $S_1$ is open, we need to show that for any direction $v$ in $S_1$, there exists some neighborhood $U$ of $v$ which is fully contained in $S_1$ (and likewise for $S_{-1}$). In terms of middle $(f,\alpha)$-lines, this means that we need some neighborhood $U$ of $v$ such that for all $v' \in U$, the middle $(f,\alpha)$-vector in the direction of $v'$ has first nonzero entry equal to 1 (or $-1$ in the case of $S_{-1}$). 

First, for every direction, the middle $(f, \alpha)$-vector is balanced by \cref{lem:balanced} and Hadwiger by \cref{lem:corlofuldisjoint}. Thus, \cref{lem:hadwigerbalanced} holds for any middle $(f,\alpha)$-vectors $x,y$ satisfying $x \preceq y$. Let $x$ be the middle $(f,\alpha)$-vector in the direction $v$ and $\ell_v$ be the corresponding middle $(f,\alpha)$-line. To prove the set $S_1$ contains an open neighborhood $U$ around $v$, we show that for sufficiently small $U$ and any $v' \in U$, the middle $(f,\alpha)$-vector $y$ in the direction $v'$ satisfies $x \preceq y$.

If an entry $x_i$ of $x$ is nonzero, it means that one of $f(\ell_v^+(C_i))$ and $f(\ell_v^-(C_i))$ is \emph{strictly less than} $\alpha$. Fix a closed ball $\mathcal{B}$ containing all sets in $\mathcal{F}$. Since our monotone functions are continuous with respect to Hausdorff distance, given any other line $\ell_{v'}$ with Hausdorff distance sufficiently close to $\ell_{v}$ inside $\mathcal{B}$, we will also have $f(\ell_{v'}^+(C_i)) < \alpha$ if $f(\ell_v^+(C_i)) < \alpha$ (and similarly for $f(\ell_v^-(C_i))$). This gives us that the middle $(f,\alpha)$-vector $y$ corresponding to $\ell_{v'}$ satisfies $x \preceq y$. By \cref{lem:continuous}, the function assigning middle $(f,\alpha)$-lines to directions ($v\mapsto \ell_v \cap \mathcal{B}$) is continuous, so we can indeed find some neighborhood $U$ around $v$ such that for all $v' \in U$, the Hausdorff distance between $\ell_{v'} \cap \mathcal{B}$ and $\ell_v \cap \mathcal{B}$ is sufficiently small.
\end{proof}

Note that a discussion similar to \cref{subsec:4.1} also applies to this colorful setting. 
In particular, we can show that for a family $\mathcal{F}$ satisfying \cref{assump3}, the single arc condition, and the transitivity condition, the existence of a colorful $(f,\alpha)$-consistent ordering of $\mathcal{F}$ is equivalent to the existence of some ordering of $\mathcal{F}$ so that every three sets in $\mathcal{F}$, no two sets of the same color, has an $(f,\alpha)$-transversal that $(f,\alpha)$-stabs the sets in order.
This is because the appropriately modified statements and proofs of Propositions~\ref{prop:single+stab->consord}~and~\ref{prop:single+trans->stab} still go through.

\section{Future work}

We suggest conjectures and problems based on our quantitative Hadwiger-type theorems. As mentioned in the introduction, Pollack and Wenger \cite{Pollack:1990cna} proved the following generalization of Hadwiger's theorem to $\mathbb{R}^d$:

\begin{theorem}[\cite{Pollack:1990cna}]\label{thrm:higherdimHadwiger}
    A finite family $\mathcal{F}$ of connected sets in $\mathbb{R}^d$ has a hyperplane transversal if and only if there is a consistent $k$-ordering of $\mathcal{F}$ for some $0\leq k \leq d-1$.
\end{theorem}

We conjecture a quantitative version of this theorem. For that, we need higher-dimensional analogues of our definitions. 

\begin{definition}
    Let $C_1, \dots, C_n$ be sets in $\mathcal{C}(\mathbb{R}^d)$. Let $f = (f_{1}, \dots , f_{n})$ be a tuple of monotone functions $f_{i}: \mathcal{C}(\mathbb{R}^d) \rightarrow \mathbb{R}$ chosen for each $C_i$. 
    Let $\alpha = (\alpha_{1}, \dots, \alpha_{n}) \in \mathbb{R}^n$. 
    We say a directed hyperplane $H\subseteq \mathbb{R}^d$ is an \textcolor{blue}{$(f,\alpha)$-hyperplane stabber} of the set $C_i$ if $f_{i}(H^+(C_i)), f_{i}(H^-(C_i)) \geq \alpha_{i} $. 
We say $H$ is an \textcolor{blue}{($f,\alpha)$-hyperplane transversal} of sets $C_1, \ldots, C_n$ if it is an $(f, \alpha)$-hyperplane stabber of all of the sets.
As before, by a slight abuse of notation, we let $f$ act as $f_{i}$ and $\alpha$ as $\alpha_{i}$ on the set $C_i$.
With this notation, we say $H$ is an \textcolor{blue}{$(f,\alpha)$-separation hyperplane} if there are some $1\leq i,j\leq n$ such that either  
$f(H^+(C_i)), f(H^-(C_j)) < \alpha$ or $f(H^-(C_i)), f(H^+(C_j)) < \alpha$. 
\end{definition}
Assume that the family $\mathcal{F}\subset \mathcal{C}(\mathbb{R}^d)$ with the monotone function $f$ and value $\alpha$ satisfies the higher-dimensional analogues of \cref{remark:hasudorffmetric} and \cref{rmk:all directions stab}.
Here is a natural generalization of an {$(f,\alpha)$-consistent ordering} to higher dimensions.
\begin{definition}
      We say that a family $\mathcal{F}$ of finite compact convex sets in $\mathbb{R}^d$ has an \textcolor{blue}{$(f,\alpha)$-consistent $k$-ordering} if there exists a map $\phi: \mathcal{F}\to \mathbb{R}^k$ such that for any subfamilies $\mathcal{F}_1, \mathcal{F}_2 \subseteq \mathcal{F}$ with $|\mathcal{F}_1|+|\mathcal{F}_2|\leq k+2$,
  if there is an $(f,\alpha)$-separation hyperplane with sets in $\mathcal{F}_1$ on one side and $\mathcal{F}_2$ on the other, then
  $conv(\phi(\mathcal{F}_1)) \cap conv(\phi(\mathcal{F}_2)) = \emptyset.$ 

\end{definition}
With these definitions, we state our conjecture for a quantitative transversal theorem in $\mathbb{R}^d$:
\begin{conjecture}
    Let $\mathcal{F}$ be a finite family of compact, convex sets in $\mathbb{R}^d$ with an $(f,\alpha)$-consistent $k$-ordering for some $0\leq k\leq d-1$. Then, there exists an $(f,\alpha)$-hyperplane transversal for all sets in $\mathcal{F}$. 
\end{conjecture}

Another direction of future work is to find a necessary and sufficient condition for the existence of $(f,\alpha)$-transversal in $\mathbb{R}^2$, and more generally in $\mathbb{R}^d$. Since there is \cref{ex:three way separated} that shows that the existence of an $(f,\alpha)$-consistent ordering is not a necessary condition for an $(f,\alpha)$-transversal, it would be interesting to find an appropriate weakening of an $(f,\alpha)$-consistent ordering equivalent to the existence of an $(f,\alpha)$-transversal.

\backmatter

\bmhead{Acknowledgments}
This project was done under the mentorship of Pablo Sober\'on as part of the 2021 New York Discrete Math REU, funded by NSF grant DMS 2051026. Carvalho’s research was supported by the KINSC Summer Scholar program by Haverford College. Takahashi's research was supported by the Grinnell College Internship Funding. 
The authors thank the careful revision by three anonymous referees, whose comments greatly improved this manuscript.

\noindent









\bibliography{sn-bibliography}

\end{document}